\def\?{|}
\def\ldim{\underline{\dim}}
\def\udim{\overline{\dim}}
\DeclareMathOperator\dist{dist}
\newtheorem{theorem}{Theorem}[section]
\newtheorem{lemma}[theorem]{Lemma}
\newtheorem{proposition}[theorem]{Proposition}
\newtheorem{question}[theorem]{Question}
\theoremstyle{definition}
\newtheorem{example}[theorem]{Example}
\theoremstyle{remark}
\newtheorem{remark}[theorem]{Remark}
\numberwithin{equation}{section}
\newcommand{\C}{\mathcal{C}}
\newcommand{\D}{\mathcal{D}}
\newcommand{\E}{\mathcal{E}}
\newcommand{\G}{\mathcal{G}}
\newcommand{\I}{\mathcal{I}}
\newcommand{\K}{\mathcal{K}}
\newcommand{\EE}{\mathbb{E}}
\newcommand{\NN}{\mathbb{N}}
\newcommand{\RR}{\mathbb{R}}
\newcommand{\PP}{\mathbb{P}}
\DeclareMathOperator*{\g}{\gamma}
\title{ A class of random  Cantor sets}
\author{Changhao Chen}
\address{Chanhao Chen\\
Department of Mathematical Sciences, P.O. Box 3000, 90014
University of Oulu, Finland}
\email{changhao.chenm@gmail.com}
\subjclass[2010]{28A80, 37F40, 60D05.}
\keywords{Random Cantor sets, dimensions, Baire category, hitting probabilities}
\thanks{The author acknowledge the support of the Vilho, Yrj\"o, and Kalle V\"ais\"al\"a foundation.}
\begin{document}

\maketitle

\begin{abstract} 
In this paper we study a class of random Cantor sets. We determine their almost sure Hausdorff, packing, box, and Assouad dimensions. From a topological point of view, we also compute their typical dimensions in the sense of Baire category. For the natural random measures on these random Cantor sets, we consider their almost sure  lower and upper local dimensions. In the end we study the hitting probabilities of a special subclass of these random Cantor sets. 
\end{abstract}



\section{Introduction}


In this paper we consider a class of random Cantor sets. This consists of a sample space $\Omega$ and a probability measure $\PP$. The sample space $\Omega$ contains a family of compact subsets of $[0,1]^{d}$, furthermore $\Omega$ is a compact metric space endowed with the Hausdorff metric. We will compute their almost sure and typical  dimensions. For each object of $\Omega$, we put a natural measure on this object. We also calculate the local dimensions of these natural measures. In the end, we study the hitting probabilities of a special subclass of these random Cantor sets. We start by a description of these random Cantor sets. Closely related random models have been considered in \cite{Chen2015, ChenKoivusalo, Shmerkin3, Shmerkin1, Shmerkin2}.

\subsection{Random Cantor sets}

Let $\{M_k\}_{k\geq 1}$ and $\{N_k\}_{k\geq 1}$ be sequences of integers with $1\leq N_k\leq M_k^d, M_k\ge 2$ for all $k$. Let
\begin{equation}\label{eq:P_n}
 P_{n}=\prod_{i=1}^{n} N_{i},~ r_{n}=\left(\prod_{i=1}^{n} M_{i}\right)^{-1}
 \end{equation}  
We denote by $\D_n=\D_n([0,1]^d)$ the family of  $r_n^{-1}$-adic closed subcubes of $[0,1]^d$,
\[
\D_n=\Big\{\prod^d_{\ell=1}[i_{\ell}r_n, (i_{\ell}+1)r_n]:  0\leq i_{\ell}\leq r_n^{-1}-1 \Big\}
\]
and let $\D= \bigcup_{n\in \NN}\D_n.$ We divide the unit cube $[0,1]^d$ into $M_1^d$ interior disjoint $M_1$-adic closed subcubes and randomly choose interior disjoint $N_1\leq M_1^d$ of these closed subcubes in the following way. We randomly choose a cube among $M_1^d$ cubes uniformly which means that every cube has the same probability of being chosen, then we randomly choose an other cube among the remaining $M_1^d-1$ cubes uniformly, and continue this process until we obtain $N_1$ cubes. Note that  each of the closed subcubes has the same probability (i.e.$ N_1/ M_1^{d}$) of being chosen, and denote their union by $E_1$. Given $E_{n}$, a random collection of $P_n$ interior disjoint $r_n^{-1}$ - adic closed subcubes of $[0,1]^{d}.$ For each cube of $E_n$, we divide it  into $M_{n+1}^d$ interior disjoint $r_{n+1}^{-1}$-adic closed subcubes and randomly choose interior disjoint $N_{n+1}$ of these closed subcubes in the same fashion as above (i.e. we randomly choose a cube among $M_{n+1}^d$ cubes uniformly, then we randomly choose an other cube among the remaining $M_{n+1}^d-1$ cubes uniformly, and continue this process until we obtain $N_{n+1}$ cubes). We ask that the choices are independent for different cubes of $E_n$. Let $E_{n+1}$ be the union of the chosen closed cubes and 
\[
E^{\omega}=E=\bigcap_{n=1}^\infty E_n
\] 
be a random limit set. Let $\Omega=\Omega(M_k,N_k)$ be our probability space which consists of  all the possible outcomes of random limit sets. For convenience we will write $E\in \Omega, \omega \in \Omega,$ or $E^{\omega} \in \Omega$ in the following. Our main object of study in this paper is the space $\Omega$. Figure \ref{figure1} shows an example of this construction.

\begin{figure}
   \centering
\resizebox{0.9\textwidth}{!}{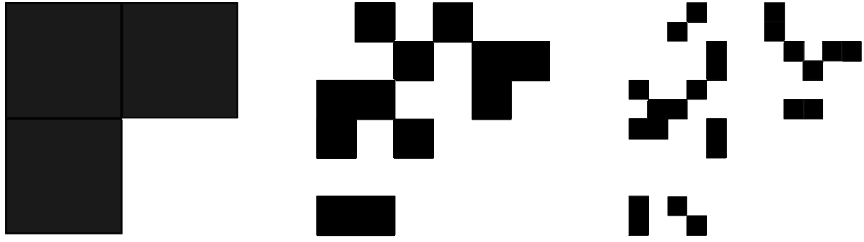}
\caption{The first three steps in the construction of $E$ with $M_1=2, N_1=3, M_2=3, N_2=4, M_3=2, N_3=2$.} 
\label{figure1} 
\end{figure}

\subsection{The topological approach}

Let $\K=\K([0,1]^{d})$ be all the compact subsets of unit cube $[0,1]^{d}$. We endow $\K$ with the Hausdorff metric. Recall that the Hausdorff distance of two compact sets $E$ and $F$ of $\K$ is defined by
\[
 d_H(E,F)=\inf\{\varepsilon>0: E \subset F^\varepsilon\text{ and }F\subset E^\varepsilon\},
\]
where $E^\varepsilon=\{x\in \RR^{d}:\dist(x,E)<\varepsilon\}$. Observe that $\Omega=\Omega(M_k,N_k) \subset \K$ and $\Omega$ is a closed subset of $\K.$ Together with the well known fact that $\K$ is a compact space, we obtain that $\Omega$ is compact subset of $\K$. 

Recall that a subset of a metric space $X$ is of first category if it is a countable union of \emph{nowhere dense} sets (i.e. whose closure in $X$ has empty interior); otherwise it is called of second category.
We say that a \emph{typical} element $x \in X$ has property $P$, if the complement of 
\[
 \{x \in X : x \text{ satisfies } P\}
\] 
is of first category. For the basic properties and various applications of Baire Category, we refer to 
\cite{Oxtoby, Stein}.

\subsection{Dimension and measure}

Let $E \subset [0,1]^d$. For any $s\geq 0$, the $s$-dimensional Hausdorff measure is defined as 
$\mathcal{H}^s(E)=\lim_{\delta \rightarrow 0}\mathcal{H}^s_\delta(E)$ where 
\[
\mathcal{H}^s_\delta(E)= \inf \Big\{\sum^{\infty}_{n=1} \?U_n\?^s: E\subset \bigcup^{\infty}_{n=1} U_n, \?U_n\? \leq \delta, n \in \NN  \Big\},
\]
and $\?U\?$ is the diameter of $U$. The Hausdorff dimension of $E$ is 
\[
\dim_H E=\sup \{s\geq 0: \mathcal{H}^s(E)=\infty\} =\inf \{s \geq 0: \mathcal{H}^s(E)=0\}.
\]

For any $\delta>0$, let $\mathcal{N}(E, \delta)$ be
 the smallest
number of sets of diameter at most $\delta$ which can cover $E$. Then the lower and upper box dimensions are defined respectively as 
\[
\ldim_B E = \liminf_{\delta\rightarrow 0}\frac{\log \mathcal{N} (E,\delta)}{- \log \delta}, \,\,\udim_B E = \limsup_{\delta\rightarrow 0}\frac{\log\mathcal{N} (E,\delta)}{- \log \delta}.
\] 
If $\ldim_B E=\udim_B E$ we denote this common value by $\dim_B E$ and call it the box dimension of $E$. 

The packing dimension of $E$ is defined as  
\[
\dim_PE=\inf \Big\{\sup \udim_B F_n : F= \bigcup_{n=1}^{\infty} F_n \Big\}.
\]

The Assouad dimension of $E$ is defined as
\begin{align*}
\dim_{A} E=\inf \{s \geq 0  : \exists~  C>0  &\text{ s.t. }  \forall ~ 0<r<R\leq \sqrt{d},   \\
& \sup_{x \in E}\mathcal{N}\left(E \cap B(x, R),r \right) \leq C \left(\frac{R}{r} \right)^s\}.
\end{align*}


The basic relationships of these dimensions are 
\[
\dim_H E\leq \ldim_B E, \,\, \dim_P E \leq \udim_B E \leq \dim_A E.
\]
For more details and further properties of these dimensions, we refer to \cite{Falconer2003, Mattila1995} and especially \cite{Luukkainen} for the Assouad dimension.

Let $\nu$ be a Radon measure on $\RR^d$. For $x \in \RR^d$, the lower and upper local (pointwise) dimensions of $\nu$ at $x$ are defined respectively as 
\[
\ldim(\nu,x)=\liminf_{r\rightarrow 0} \frac{\log \nu(B(x,r))}{\log r}, \,\,
\udim(\nu,x)=\limsup_{r\rightarrow 0} \frac{\log \nu(B(x,r))}{\log r}.
\]
If $\ldim(\nu,x)=\udim(\nu,x)$ we denote this common value by $\dim(\nu,x)$, and call it the local dimension of  $\nu$ at $x$. For further details and basic properties on the local dimensions of measures, see \cite[Chapter 10]{Falconer1997}.

We consider the natural random measure on the random Cantor set. Let $E=\bigcap_{n=1}^\infty E_n$ be a realization. For each $n\in \NN$, let ($P_n, r_n$ are from \eqref{eq:P_n})
\begin{equation}\label{eq:p_n}
  p_n=P_nr_n^{d}
\end{equation} and
\[
\mu_n(A)= \int \textbf{1}_{A \cap E_n}(x) p_n^{-1} dx
\]
where $\textbf{1}_{F}$ is the indicator function of the set $F$. Note that for every $Q\in \D_n, Q\subset E_n$ (we will denote this by $Q\in E_n$ in the following for convenience), we have $\mu_n(Q)=P_n^{-1}$. It is clear that  $\mu_n$ weakly convergence to a measure $\mu$, see \cite[Chapter 1]{Mattila1995}. We call this measure $\mu$ the natural measure on $E$.

\subsection{Results}

There exists a huge literature on computing the  `almost sure' dimensions for many other random fractal sets. We refer to \cite{Falconer1997, Falconer2003,  Fraser2014, Ga, Henna, Mauldin, Ojala} and reference therein. For the general estimations and the almost sure dimensions of these random Cantor sets, we have the following result. Let
\begin{equation}\label{eq:P(n,n+k)}
r(n,n+k)=\left(\prod_{i=n}^{n+k} M_{i}\right)^{-1},\, P(n,n+k)=\prod_{i=n}^{n+k} N_{i}.
\end{equation}

Denote
\begin{equation}\label{s_1}
s_1= \liminf_{n\rightarrow \infty} \frac{\log P_n }{-\log r_n},\, s_2=\limsup_{n\rightarrow \infty} \frac{\log P_{n+1} }{-\log r_n +\frac{1}{d}\log N_{n+1}},
\end{equation} 
and
\begin{equation}\label{eq:assouad}
s_3= \limsup_{k\rightarrow \infty} \sup_{n\in \NN} \frac{\log P(n,n+k) }{ -\log r(n,n+k)}.
\end{equation}
Furthermore let
\begin{equation}\label{eq:ts}
t^{*}= \liminf_{n\rightarrow \infty} \frac{\log P_n}{-\log r_{n+1}-\frac{1}{d}\log N_{n+1}}, \,\,
 s^{*}=\limsup_{n\rightarrow \infty}\frac{\log P_{n}}{-\log r_n}.
\end{equation}

Note that if the $N_n$ are bounded then $t^{*}=s_1$ and $s^{*}=s_2.$ Figure \ref{figure2} `explains' why there is $r_{n}/N_{n+1}^{\frac{1}{d}}$ in the definition of $s_2$. Figure \ref{figure3} `explains' why there is $r_{n+1}N_{n+1}^{\frac{1}{d}}$ in the definition of $t^{*}$. 

\begin{figure}
\includegraphics[]{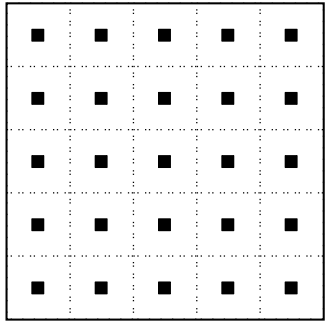}
\caption{ 
There are $N_{n+1}$  subcubes (dark cubes) of $Q$ ($Q\in E_n$) which belongs to $E_{n+1}$, and they uniformly distributed inside the cube $Q$. Thus there are nearly $N_{n+1}$ subcubes of $Q$ with side length $r_n/N_{n+1}^{\frac{1}{d}}$ (depends if $N_{n+1}^{\frac{1}{d}}$ is an integer or not) which intersect $E$.  In the end, we have $P_{n+1}$ interior disjoint cubes with side length $r_n/N_{n+1}^{\frac{1}{d}}$ which intersect $E$. This come out the definition of  $s_2$.}
\label{figure2} 
\end{figure}

\begin{figure}
\bigskip
\medskip
\resizebox{0,4\textwidth}{!}{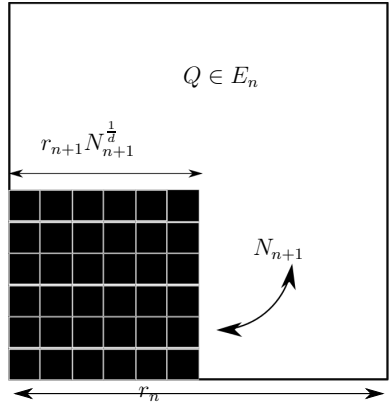}
\caption{ 
There are $N_{n+1}$ subcubes of $Q$ which belongs to $E_{n+1}$, and all of them accumulate at the left bottom of $Q$. Thus we can consider these $N_{n+1}$ subcubes as one cube with side length near $r_{n+1}N_{n+1}^{\frac{1}{d}}$ (depends if $N_{n+1}^{\frac{1}{d}}$ is an integer or not), and there are $P_n$ such cubes. This come out the definition of  $t^{*}$.}
\label{figure3} 
\end{figure}

\begin{theorem}\label{thm:dimension}
(1) For any $E\in \Omega$, we have
\[
t^{*}\leq \dim_H E \leq \ldim_B E\leq s_1.
\]

(2)  For any $E\in \Omega$, we have
\[
s^{*}\leq \dim_P E \leq \udim_B E \leq s_2.
\]

(3) The almost sure  Hausdorff dimension and lower box dimension are maximal, i,e., almost surely
\[
\dim_H E= \underline{\dim}_B E=s_1.
\]

(4) The almost sure packing dimension and upper box dimension are maximal, i,e., almost surely
\[
\dim_P E = \udim_B E =s_2.
\]

(5) For any $E\in \Omega$, we have $\dim_A E=s_3$ provided $\{N_k\}$ is bounded. Otherwise, almost surely $\dim_A E=d$.
\end{theorem}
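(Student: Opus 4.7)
The proof naturally splits into the deterministic inequalities of (1)--(2), their almost-sure improvements in (3)--(4), and the Assouad dimension (5). For the deterministic upper bounds, $\ldim_B E\le s_1$ follows at once from covering $E$ by the $P_n$ cubes of $E_n$ at scale $\sqrt{d}\,r_n$. For $\udim_B E\le s_2$, I would follow Figure~\ref{figure2}: partition each $Q\in E_n$ into supercubes of side $\asymp r_n/N_{n+1}^{1/d}$; since $Q$ contains only $N_{n+1}$ chosen subcubes of $E_{n+1}$, at most $N_{n+1}$ supercubes suffice to cover $Q\cap E$, so $\mathcal{N}(E,cr_n/N_{n+1}^{1/d})\le cP_{n+1}$, yielding the $s_2$ bound.

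For the deterministic lower bounds I would use the natural measure $\mu$, which assigns $1/P_n$ to every $Q\in E_n$. The inclusion $Q\subset B(x,\sqrt{d}\,r_n)$ for $x\in Q$ gives $\mu(B(x,\sqrt{d}\,r_n))\ge 1/P_n$, hence $\udim(\mu,x)\ge s^{*}$ for every $x\in E$, and the standard characterization of packing dimension through upper pointwise dimensions yields $\dim_P E\ge s^{*}$. For $\dim_H E\ge t^{*}$, I would combine, for $r\in[r_{n+1},r_n]$, the ``level-$n$ counting'' bound $\mu(B(x,r))\le c/P_n$ with the ``level-$(n{+}1)$ volume'' bound $\mu(B(x,r))\le cr^d/p_{n+1}$, obtained by bounding the number of $E_{n+1}$-cubes meeting $B(x,r)$ by $c(r/r_{n+1})^d$. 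These two agree at $r\asymp r_{n+1}N_{n+1}^{1/d}$, precisely the crossover of Figure~\ref{figure3}, and a direct check shows that $\log\mu(B(x,r))/\log r$ is minimized there with value tending to $t^{*}$; the mass distribution principle then gives $\dim_H E\ge t^{*}$.

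For the almost-sure improvements in (3) and (4), the number of chosen $E_{n+1}$-subcubes of $Q\in E_n$ lying in a prescribed subregion of $Q$ is hypergeometrically distributed with mean proportional to the volume fraction, so Chernoff concentration yields the sharper bound $\mu(B(x,r))\le cr^d/p_n$ uniformly for $r\in[r_{n+1},r_n]$ with high probability. A union bound over the $P_n$ cubes of $E_n$, a dyadic grid of ball-centers and a dyadic grid of scales, followed by Borel--Cantelli, makes this uniform for all $n\ge n_0(\omega)$, almost surely. Recomputing $\log\mu(B(x,r))/\log r$ with the sharper bound, the worst scale becomes $r=r_n$ itself, where the ratio tends to $\log P_n/(-\log r_n)$, giving $\ldim(\mu,x)\ge s_1$ a.s.\ and hence $\dim_H E\ge s_1$ a.s.; evaluating the same inequality at $r=r_n/N_{n+1}^{1/d}$ yields $\mu(B(x,r))\le c/P_{n+1}$, so $\udim(\mu,x)\ge s_2$ for $\mu$-a.e.\ $x$ and $\dim_P E\ge s_2$ a.s.

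For part (5) with $\{N_k\}$ bounded by some $C_0$, the inequality $\dim_A E\le s_3$ is deterministic: given $0<r<R$, take $n$ and $m$ with $r_{n+1}< R\le r_n$ and $r_{m+1}< r\le r_m$, cover $B(x,R)\cap E$ by the at most $c_dP(n+1,m+1)$ cubes of $E_{m+1}$ meeting it, and use the definition of $s_3$ together with $N_k\le C_0$ to absorb the boundary factors $N_{n+1},N_{m+1}$ from the mismatch between $(R,r)$ and the natural scales. The matching lower bound is also deterministic: for any $s<s_3$ pick $n,k$ realizing the $\limsup$, fix $x\in E\cap Q$ for some $Q\in E_n$, and count its $P(n+1,n+k)=P(n,n+k)/N_n\ge P(n,n+k)/C_0$ descendants at level $n+k$. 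When $\{N_k\}$ is unbounded, $\dim_A E\le d$ is trivial, and for the matching a.s.\ lower bound I would choose a subsequence $k_j$ with $N_{k_j}\to\infty$ and use independence across the $P_{k_j-1}$ cubes of $E_{k_j-1}$, combined with Borel--Cantelli, to produce almost surely some $Q\in E_{k_j-1}$ whose chosen subcubes cluster densely enough to realize Assouad ratios arbitrarily close to $d$. The main obstacle, I expect, is precisely this last step: when $N_{k_j}/M_{k_j}^d\to 0$ the density available within a single level is weak, so one has to iterate the concentration argument across several levels and tune the union bound carefully with an $\varepsilon$-slack in the exponent, paralleling the subtle hypergeometric estimates that already drive (3)--(4).
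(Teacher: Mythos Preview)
Your treatment of the deterministic bounds in (1), (2), and of (5) with bounded $\{N_k\}$, matches the paper. The differences, and the gaps, are in the almost-sure parts.

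For (3), your uniform bound $\mu(B(x,r))\le cr^d/p_n$ on $[r_{n+1},r_n]$ is false: at $r=r_{n+1}$ it would force $1/P_{n+1}\le c\,r_{n+1}^d/p_n=c/(P_nM_{n+1}^d)$, i.e.\ $M_{n+1}^d\le cN_{n+1}$, which fails whenever $N_{n+1}\ll M_{n+1}^d$. Even the natural correction $\mu(B(x,r))\le c\max\{r^d/p_n,\,1/P_{n+1}\}$ cannot come from your union bound: at the crossover scale $r\approx r_n/N_{n+1}^{1/d}$ the hypergeometric count has mean $\approx 1$, so the Chernoff tail is only $e^{-O(1)}$, while you must union over $\approx P_{n+1}$ positions. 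The scheme can be rescued if one aims directly for the weaker target $\mu(B(x,r))\le r^{s_1-\varepsilon}$, since then the target exceeds the hypergeometric mean by a factor $\gtrsim r_n^{-\varepsilon/2}$ and the tail beats the union; but that is not what you wrote. The paper sidesteps all of this with the energy method: the two-point estimate $\PP(x\in E_n,y\in E_n)/p_n^2\le C\,d(x,y)^{s_1-d-\varepsilon}$ (Lemma~\ref{lem:twopoint}) feeds straight into $\EE\int\int d(x,y)^{-s_1+2\varepsilon}\,d\mu(x)\,d\mu(y)<\infty$, a second-moment computation with no union bound at all.

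For (4), the same obstruction hits your claim $\mu(B(x,r_n/N_{n+1}^{1/d}))\le c/P_{n+1}$ for all $x$: this is a \emph{supremum} over supercubes, with per-supercube tail only $\approx 1/c!$. The paper instead controls a \emph{sum}: Corollary~\ref{cor:intersection} and Lemma~\ref{lem:law of large numbers} show that many supercubes are hit, giving $\mathcal{N}(E,r^*_{n_k+1})\gtrsim P_{n_k+1}$ along a subsequence and hence $\udim_B E\ge s_2$ a.s.; the upgrade to $\dim_P E$ then goes through Lemma~\ref{lem:bp} and the homogeneity of the construction across open sets, rather than through pointwise upper local dimensions.

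For (5) with $\{N_k\}$ unbounded, your instinct (independence across parent cubes plus Borel--Cantelli) is exactly the paper's, but the geometry is the opposite of ``cluster'': one wants some $Q\in E_{n_k}$ whose subcubes are \emph{spread out}, so that $\mathcal{N}(E\cap Q,\,r_{n_k}/N_{n_k+1}^{1/d})\gtrsim N_{n_k+1}$. Corollary~\ref{cor:intersection} already gives this in each $Q$ with probability $>1/4$, hence in some $Q$ with probability $\ge 1-(3/4)^{P_{n_k}}$; a single level suffices, contrary to the multi-level iteration you anticipate.
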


We can also regard the space $\Omega$ as a subclass of Moran sets. The dimensional properties of Moran sets have been studied extensively, we refer to \cite{ FengWenWu1997, K, Li, Moran, PengWangWen, Wen} and reference therein. The results of Theorem \ref{thm:dimension} are similar to the dimensional results of one dimensional homogeneous Cantor sets (uniform Cantor sets). An interesting fact is that they have the `same' dimensional formulas (for our case $d=1$). For Hausdorff, lower box, upper box, and packing dimensions  of one dimensional homogeneous Cantor sets, see \cite{FengWenWu1997}. For Assouad dimension of one dimensional homogeneous Cantor sets, see \cite{PengWangWen}. The Figure \ref{figure3} corresponds to  the partial homogeneous Cantor sets of  \cite{FengWenWu1997}. 

\begin{remark}
The above statements (1) and (2) generalize the results 
of \cite{FengWenWu1997} from one dimensional Moran sets to our model, and the statement (5) when $N_k$ are bounded generalize the result of \cite{PengWangWen} from homogeneous Cantor sets to our model. The proof of $\dim_H E \geq t^{*}$ is adapted from \cite[Theorem 2.1]{FengWenWu1997} to our setting, while the method for the proof of $\udim_B E\leq s_2$ is different from that of \cite{FengWenWu1997}. The proof of the statement (5)  when $N_k$ are bounded generalize the method in \cite{PengWangWen} to high dimension. Our main contribution of Theorem \ref{thm:dimension} is to determine the almost sure dimensions of these random cantor sets for the case when $\{N_k\}_{k\in\NN}$ is unbounded. Our method combines geometric and probability estimates on the distribution of these random Cantor sets. 
\end{remark}

Recall that $(\Omega, d_H)$ is a compact metric space. For the typical dimensions of these random Cantor sets, we have the following result. For some related results we refer to \cite{FengWu, Fraser2015, FraserMiaoShasha}

\begin{theorem}\label{thm:typical}
(1) The typical Hausdorff dimension and lower box dimension are minimal, i.e., for a typical $E\in \Omega,$ we have
\[
\dim_H E= \underline{\dim}_B E= t^{*}.
\]

(2) The typical packing dimension and upper box dimension are maximal, i.e., for a typical $E\in \Omega$, we have
\[
\dim_P E = \udim_B E  =s_2.
\]

(3) If $\{N_k\}$ is unbounded, then for a typical $E\in \Omega$, we have 
\[
\dim_A E =d.
\]
\end{theorem}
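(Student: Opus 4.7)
The plan is to apply Baire category to $(\Omega, d_H)$. For the combinatorial bookkeeping I use the natural cylinders $[F_1,\ldots,F_n] = \{E\in\Omega : E_k = F_k,\ 1 \leq k \leq n\}$, and the comeager sets produced below will be countable intersections of unions of such cylinders. By Theorem \ref{thm:dimension}, the inequalities $t^* \leq \dim_H E$, $\udim_B E \leq s_2$, and $\dim_A E \leq d$ hold for every $E \in \Omega$, so the three parts of the typical theorem reduce to establishing the reverse bounds $\ldim_B E \leq t^*$, $\dim_P E \geq s_2$, and $\dim_A E \geq d$ on suitable dense $G_\delta$ subsets of $\Omega$.

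For Part (1), I fix a subsequence $n_j$ realising the $\liminf$ defining $t^*$ and, motivated by Figure \ref{figure3}, let $W_j$ be the union of all cylinders $[F_1,\ldots,F_{n_j+1}]$ in which, inside every $Q \in F_{n_j}$, the $N_{n_j+1}$ chosen subcubes are crammed into a single corner of $Q$ and hence fit in a cube of side $\sqrt{d}\,r_{n_j+1}N_{n_j+1}^{1/d}$. For $E\in W_j$ this gives $\mathcal{N}(E, \sqrt{d}\,r_{n_j+1}N_{n_j+1}^{1/d}) \leq P_{n_j}$. Each $W_j$ is open, and density of $\bigcup_{j\geq k} W_j$ is immediate by extending any cylinder $[F_1,\ldots,F_m]$ first arbitrarily up to level $n_j \geq m$ and then making the corner choice at level $n_j+1$. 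On the dense $G_\delta$ $\bigcap_k \bigcup_{j\geq k} W_j$ this forces $\ldim_B E \leq t^*$, which combined with $\dim_H E \geq t^*$ completes Part (1).

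Part (2) has two layers. The outer layer uses the classical Baire-category consequence
\[
\dim_P E \;\geq\; \inf\{\,\udim_B(E\cap W) : W \subset \RR^d \text{ open, } E \cap W \neq \emptyset\,\},
\]
which holds because in any decomposition $E = \bigcup_i F_i$ of a compact set some $\overline{F_i}$ has nonempty relative interior in $E$, giving $\udim_B F_i = \udim_B \overline{F_i} \geq \udim_B(E\cap W)$ for such $W$. It is thus enough to show that, for each $Q \in \D$ and each $\epsilon>0$, the set $\{E : Q \cap E = \emptyset\} \cup \{E : \udim_B(E\cap Q) \geq s_2-\epsilon\}$ is comeager; intersecting over the countable family $\D$ and over $\epsilon=1/m$ then yields $\dim_P E \geq s_2$ typically. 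For the inner layer, I fix $Q \in \D_n$ and a subsequence $m_j$ realising the $\limsup$ defining $s_2$, and, motivated by Figure \ref{figure2}, let $V_j^Q$ be the union of cylinders in which, within every $Q' \in F_{m_j}$ with $Q' \subset Q$, the $N_{m_j+1}$ subcubes are chosen in a ``spread-out'' pattern such that no axis-aligned cube of side $r_{m_j}/N_{m_j+1}^{1/d}$ contains more than $C_d$ of them (such a pattern exists via a simple lattice selection). On $V_j^Q$ this gives $\mathcal{N}(E\cap Q, r_{m_j}/N_{m_j+1}^{1/d}) \geq P(n+1, m_j+1)/C_d$, and density of $\bigcup_{j\geq k} V_j^Q$ again follows by the cylinder-extension trick.

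For Part (3), I use the unboundedness of $\{N_k\}$ to pick $k_j \to \infty$ with $N_{k_j} \to \infty$, and let $W_j$ be the union of cylinders in which at least one $Q \in F_{k_j-1}$ has its $N_{k_j}$ subcubes packed inside a sub-cube of $Q$ of side $R := \sqrt{d}\,r_{k_j}N_{k_j}^{1/d}$. Taking $x$ at the centre of this sub-cube gives $\mathcal{N}(E\cap B(x,R), r_{k_j}) \geq N_{k_j}/C_d$ and $R/r_{k_j} = \sqrt{d}\, N_{k_j}^{1/d}$, whence for any $s < d$
\[
\frac{\mathcal{N}(E\cap B(x,R), r_{k_j})}{(R/r_{k_j})^s} \;\gtrsim\; N_{k_j}^{\,1-s/d} \;\longrightarrow\; \infty,
\]
forcing $\dim_A E \geq s$ on the corresponding dense $G_\delta$; letting $s \uparrow d$ gives $\dim_A E = d$ typically. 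The main technical obstacle shared by all three parts is verifying openness of the $W_j$ and $V_j^Q$ in $(\Omega, d_H)$, since the cylinders $[F_1,\ldots,F_n]$ are not literally clopen in the Hausdorff topology (distinct choice sequences through adjacent cubes can produce Hausdorff-arbitrarily-close limit sets). This is absorbed by the slight $\sqrt{d}$-inflation of the relevant scales, which makes the covering and clustering inequalities survive small Hausdorff perturbations.
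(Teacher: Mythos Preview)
Your proposal is correct, and for Part~(1) it is essentially the paper's argument: both exploit the corner-packing configuration of Figure~\ref{figure3} along a subsequence realising $t^{*}$, and both handle the fact that cylinder sets are not Hausdorff-open by working with open $d_H$-balls of radius comparable to $r_{n_k+1}$ around carefully chosen elements of $\Omega$.

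Part~(2) is where you genuinely diverge from the paper. The paper never localises to cubes $Q\in\D$; instead it builds a single dense $G_\delta$ set $\G$ by placing the $N_{n_k+1}$ subcubes in a spread-out pattern (one near each corner $z_{\widetilde Q}$ for $\widetilde Q\in\C(Q,N^*_{n_k+1})$), and then shows that for every $E\in\G$ and \emph{every} $x\in E$ the natural measure satisfies $\mu(B(x,r^{*}_{q_k+1}/10))\le C\,P_{q_k+1}^{-1}$, so that $\udim(\mu,x)\ge s_2$ uniformly; Lemma~\ref{lem:lp} then gives $\dim_P E\ge s_2$ directly. Your route replaces this measure-theoretic step by the open-set characterisation of packing dimension (essentially Lemma~\ref{lem:bp}) and a countable intersection over $Q\in\D$ and $\varepsilon=1/m$. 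Both are valid; the paper's approach is cleaner in that one configuration controls all points simultaneously and avoids the boundary nuisance you would otherwise face (the set $\{E:Q\cap E=\emptyset\}$ is not quite the right dichotomy, since $E$ may meet $\partial Q$ without $Q\in E_n$---you want $\{E:Q\notin E_n\}$ instead, which is again only open after the $\sqrt d$-inflation you already flag). Your approach, on the other hand, is measure-free and mirrors the strategy the paper itself uses for the \emph{almost sure} packing dimension in Theorem~\ref{thm:dimension}(4).

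For Part~(3) the paper simply recycles the spread-out $\G$ from Part~(2): inside any $Q\in E_{n_k}$ there are $\asymp N_{n_k+1}$ separated subcubes at scale $r^{*}_{n_k+1}$, giving $\mathcal N(E\cap B(x,r_{n_k}),r^{*}_{n_k+1})\gtrsim (r_{n_k}/r^{*}_{n_k+1})^d$ and hence $\dim_A E=d$ via Lemma~\ref{lem:wen}. Your corner-packing variant achieves the same inequality with the roles of ``many points in a small ball'' versus ``well-separated points in a big ball'' swapped; either configuration witnesses the Assouad lower bound.
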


Note that the typical Hausdorff dimension and  lower box dimension are as small as possible, but the almost sure Hausdorff dimension and lower box dimension are as large as possible. Furthermore the packing dimension, upper box dimension and Assouad dimension are as large as possible in the sense of both almost sure dimension and typical dimension.

For the  local dimensions of the natural measures supported on  these random Cantor sets, we have the following result. Let 
\[
s^{**}=\limsup_{n\rightarrow \infty}\frac{\log P_{n+1}}{-\log r_n}.
\]

\begin{theorem}\label{thm:localdimension}
(1) For any $E\in \Omega, x\in E$, we have 
\[  
t^{*}\leq \ldim(\mu,x)\leq s_1.
\]

(2) For any $E\in \Omega, x\in E$, we have 
\[  
s^{*}\leq \udim(\mu,x)\leq s^{**}.
\]

(3) For $\PP$-almost all $E\in \Omega$, and $\mu$ almost every $x\in E$, we have 
\[
\ldim(\mu, x)= s_1. 
\]

(4) For $\PP$-almost all $E\in \Omega$, and $\mu$ almost every $x\in E$, we have 
\[
\udim(\mu, x)= s_2. 
\]
\end{theorem}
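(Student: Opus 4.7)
The plan is to handle (1) and (2) deterministically and then upgrade to (3) and (4) by controlling the statistics of how many chosen level-$(n+1)$ subcubes meet a ball of radius $r$ around a $\mu$-typical point.

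For any $x\in E$ and $r>0$, let $n$ be the unique index with $r_{n+1}\le r<r_n$. Two elementary facts drive (1) and (2): the level-$n$ cube $Q_n(x)\ni x$ has $\mu(Q_n(x))=P_n^{-1}$ and diameter $\sqrt{d}\,r_n$, so $\mu(B(x,\sqrt{d}\,r_n))\ge P_n^{-1}$; and $B(x,r)$ meets at most $C\min\{(r/r_{n+1})^d,N_{n+1}\}$ subcubes of $E_{n+1}$ inside each neighbouring level-$n$ cube, so
\[
\mu(B(x,r))\le C\min\{(r/r_{n+1})^d,\,N_{n+1}\}\,P_{n+1}^{-1}.
\]
Evaluating at $r=\sqrt{d}\,r_n$ immediately yields $\ldim(\mu,x)\le s_1$ and $\udim(\mu,x)\ge s^*$. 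Minimising $\log\mu(B(x,r))/\log r$ from the displayed upper bound over $r\in[r_{n+1},r_n]$ locates the worst scale at $r=r_{n+1}N_{n+1}^{1/d}$ (the balance point of the two arguments of the $\min$), whose liminf is exactly $t^*$; this gives $\ldim(\mu,x)\ge t^*$. For $\udim(\mu,x)\le s^{**}$ use the level-$(n+1)$ inclusion $B(x,r)\supset Q_{n+1}(x)$ whenever $r\ge\sqrt{d}\,r_{n+1}$, so $\log\mu/\log r\le \log P_{n+1}/(-\log r_n)$ and the limsup is $s^{**}$.

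For (3), $\ldim(\mu,x)\le s_1$ comes from (1). For the reverse, fix $\epsilon>0$. At $r=r_n$ the deterministic bound gives $\mu(B(x,r_n))\le 3^dP_n^{-1}\le r_n^{s_1-\epsilon}$, since $P_n\ge r_n^{-(s_1-\epsilon/2)}$ eventually by the liminf definition of $s_1$. The delicate range is $r\in(r_{n+1},r_n)$, where the deterministic bound produces only $t^*$, which may be strictly less than $s_1$. I would close the gap probabilistically. Conditionally on the construction up to level $n$, the $N_{n+1}$ chosen subcubes inside each level-$n$ cube are uniform over the $M_{n+1}^d$ candidate positions, so
\[
\mathcal{N}_{n+1}(x,r):=\#\{Q'\in E_{n+1}:Q'\cap B(x,r)\ne\emptyset\}
\]
is a sum of hypergeometric-type indicators with mean at most $C(r/r_n)^dN_{n+1}+C$. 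A Chernoff/Bernstein estimate bounds its upper tail by $\exp(-c\max(t\mu_0,t^2\mu_0))$; a union bound over the $P_{n+1}$ cubes of $E_{n+1}$ and over a logarithmically fine net of scales in $[r_{n+1},r_n]$, combined with Borel--Cantelli (using $\sum_nP_{n+1}^{-10}<\infty$), yields that almost surely, for $\mu$-a.e.\ $x$, all $n$ large, and all $r\in[r_{n+1},r_n]$,
\[
\mu(B(x,r))\le C\max\{(r/r_n)^dN_{n+1},\log P_{n+1}\}\cdot P_{n+1}^{-1}.
\]
Using $P_n\ge r_n^{-(s_1-\epsilon)}$ one checks that both sides of the max lead to $\mu(B(x,r))\le r^{s_1-2\epsilon}$ throughout the range; hence $\ldim(\mu,x)\ge s_1-2\epsilon$, and $\epsilon\to 0$ finishes (3).

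Part (4) is parallel with the two tails of the concentration swapped. For $\udim(\mu,x)\ge s_2$, pick a subsequence $n_k$ along which $\log P_{n_k+1}/(-\log r_{n_k}+\frac{1}{d}\log N_{n_k+1})\to s_2$, and test at $\rho_k=r_{n_k}N_{n_k+1}^{-1/d}$: at this scale a typical ball captures $O(1)$ subcubes of $E_{n_k+1}$, and the lower-tail version of the same concentration plus Borel--Cantelli give $\mu(B(x,\rho_k))\le CP_{n_k+1}^{-1}$ for $\mu$-a.e.\ $x$, realising the limsup $s_2$. For $\udim(\mu,x)\le s_2$, the deterministic bound $\mu(B(x,r))\ge P_{n+1}^{-1}$ handles $r\in[\sqrt{d}\,r_{n+1},\,r_nN_{n+1}^{-1/d}]$ (the limsup definition of $s_2$ gives $P_{n+1}^{-1}\ge r^{s_2+\epsilon}$ on this range), while on $r\in[r_nN_{n+1}^{-1/d},r_n]$ one invokes the matching almost sure lower estimate $\mathcal{N}_{n+1}(x,r)\ge c(r/r_n)^dN_{n+1}$ to get $\mu(B(x,r))\gtrsim(r/r_n)^dP_n^{-1}\ge r^{s_2+\epsilon}$, using $s^*\le s_2$. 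The main obstacle, common to (3) and (4), is producing the two-sided concentration for $\mathcal{N}_{n+1}(x,r)$ uniformly in $x$ and $r$ strongly enough to survive a union bound of size $P_{n+1}$; because $P_{n+1}$ can grow superexponentially when $\{N_k\}$ is unbounded, tail estimates subpolynomial in $P_{n+1}$ are essential, and this is where the real work lies, mirroring the unbounded case in the proof of Theorem~\ref{thm:dimension}(3)--(4).
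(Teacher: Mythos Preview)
Your treatment of (1) and (2) matches the paper's: the same mass bounds $P_n^{-1}\le\mu(B(x,\sqrt d\,r_n))\le 3^dP_n^{-1}$ and the same pivotal scale $r_{n+1}N_{n+1}^{1/d}$ appear there.

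For (3) and (4) the paper takes a different and considerably softer route. Instead of pointwise concentration for $\mathcal N_{n+1}(x,r)$ plus union bounds, it proves a first-moment estimate
\[
\EE\!\left(\int \mu(B(x,r))\,d\mu(x)\right)\le C\,r^{s}\qquad(0<s<s_1),
\]
obtained from the conditional bound $\EE(\mu_n(B(x,r))\mid x\in E_n)\le Cr^{s}$ via Fatou and Fubini, and then sums over dyadic radii $r=2^{-j}$ to conclude $\ldim(\mu,x)\ge s$ for $\mu$-a.e.\ $x$ almost surely. The same device, run along the subsequence $\ell_{k_j}=r_{k_j}N_{k_j+1}^{-1/d}$, gives $\udim(\mu,x)\ge s_2$. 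This integrated approach automatically yields the $\mu$-a.e.\ conclusion without any union bound over centres, and avoids the circularity in your phrase ``union bound over the $P_{n+1}$ cubes of $E_{n+1}$'' (those cubes are determined by the very randomness you are controlling; the honest fix is to net over all of $E_n$, at cost $P_nM_{n+1}^d$).

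For the inequality $\udim(\mu,x)\le s_2$ in (4), the paper uses no randomness whatsoever: since $\dim_PE\le s_2$ holds for every $E\in\Omega$ by Theorem~\ref{thm:dimension}(2), Lemma~\ref{lem:lp} forces $\mu\{x:\udim(\mu,x)>s_2\}=0$. Your proposed probabilistic argument here has a genuine obstacle. You ask for a \emph{lower-tail} bound $\mathcal N_{n+1}(x,r)\ge c(r/r_n)^dN_{n+1}$ uniformly over $r\in[r_nN_{n+1}^{-1/d},r_n]$, but at the left endpoint the mean count is $O(1)$, so the probability of undershooting is bounded away from $0$ and no tail estimate ``subpolynomial in $P_{n+1}$'' exists there. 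One can attempt a repair by extending the deterministic range $\mu\ge P_{n+1}^{-1}$ up to the scale where the mean count reaches $K\log P_{n+1}$ and invoking concentration only beyond, but this demands an extra verification that $\log\log P_{n+1}=o\bigl(-\log r_n+\tfrac1d\log N_{n+1}\bigr)$; the paper's $\dim_P$ argument sidesteps this entirely.
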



Same kind of results have been obtained for other ``random'' measures,  we refer to \cite{FalconerMiao} and reference therein. For the local dimensions of the Moran measures, we refer to \cite{KLS, LiWu, LouWu}.

\begin{remark}\label{rem:ttt}
The dimension of a set  has essential connection with the local dimension of the measure on it, we refer to  \cite[Proposition 2.3-2.4]{Falconer1997} for more details. In fact there are some overlaps between our Theorem \ref{thm:dimension} and Theorem \ref{thm:localdimension}. Actually Theorem \ref{thm:localdimension} (3)-(4) combined with the Propostion 2.3 of \cite{Falconer1997} and Theorem \ref{thm:dimension} (1)-(2) implies Theorem \ref{thm:localdimension} (3)-(4). We present more details in the following. 

Theorem \ref{thm:localdimension} (3) and  \cite[Proposition 2.3 (a)]{Falconer1997} implies that almost surely $\dim_H E \geq s_1$. Combining this with Theorem \ref{thm:dimension} (1) which gives $\ldim_B E \leq s_1$ for any set $E\in \Omega$, we obtain Theorem \ref{thm:dimension} (3). 

Theorem \ref{thm:localdimension} (4) and  \cite[Proposition 2.3 (c)]{Falconer1997} implies that almost surely $\dim_P E \geq s_2$. Combining this with Theorem \ref{thm:dimension} (2) which gives $\udim_B E \leq s_2$ for any set $E\in \Omega$, we obtain Theorem \ref{thm:dimension} (4). 

Since our methods for Theorem \ref{thm:dimension} (3)-(4) and Theorem \ref{thm:localdimension} (3)-(4) are different, and the methods are interesting on it's own, we present them separately.
\end{remark}

For the hitting probability of these random Cantor sets, we consider the special case that $M_k=M$ and $N_k= N$ for all $k\in \NN$, and we have the following result. Note that the result is similar to the hitting probability of fractal percolation (see \cite[Theorem 9.5 ]{MPeres}) and  random covering sets (see \cite{JJK}).

\begin{theorem}\label{thm:hitting}
Let $F$ be a Borel subset  of $[0,1]^d$ with $\dim_H F =\alpha$ and $s= \log N/ \log M$. Then  we have

(1) If $\alpha < d-s$ then almost surely $E \cap F$ is empty.

(2) If $\alpha > d-s$ then $E$ intersects $F$ with positive probability. 

(3) If $\alpha > d-s$ then $\parallel\dim_H ( E  \cap F)\parallel_\infty =\alpha+s -d$, where the norm
is an essential supremum in the underlying probability space.
\end{theorem}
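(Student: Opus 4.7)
The plan is a hitting-probability argument via first- and second-moment computations, tailored to the sampling-without-replacement construction of $E$. The geometry enters through a single two-point estimate: for $x\ne y\in[0,1]^d$ let $k(x,y)$ be the deepest level at which the $M$-adic cubes containing $x$ and $y$ coincide, so $|x-y|\le\sqrt d\,M^{-k(x,y)}$. Then
\[
\PP(x\in E_n)=(N/M^d)^n,\qquad \PP(\{x,y\}\subset E_n)\le C\,(N/M^d)^{2n}\,|x-y|^{-(d-s)},
\]
the second valid for $N\ge2$ (if $N=1$, then $s=0$ so $\alpha>d-s=d$ is impossible and (2)-(3) are vacuous). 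I would prove the second bound by splitting at level $k(x,y)$: the $k(x,y)$ shared ancestors each survive with probability $N/M^d$, the splitting step contributes $N(N-1)/(M^d(M^d-1))\lesssim(N/M^d)^2$, and the remaining $n-k(x,y)-1$ descents are independent, each contributing $(N/M^d)^2$; this gives $(N/M^d)^{2n}(M^d/N)^{k(x,y)}$ and $(M^d/N)^{k(x,y)}=M^{k(x,y)(d-s)}\le C\,|x-y|^{-(d-s)}$.

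For part (1), fix $\alpha'\in(\alpha,d-s)$. Since $\mathcal H^{\alpha'}(F)=0$, for any $\varepsilon>0$ I cover $F$ by $\{U_i\}$ with $\sum|U_i|^{\alpha'}<\varepsilon$ and pick $n_i$ with $M^{-n_i}\asymp|U_i|$. Each $U_i$ meets only $O(1)$ cubes of $\D_{n_i}$, so $\PP(E\cap U_i\ne\emptyset)\le C\,(N/M^d)^{n_i}\asymp|U_i|^{d-s}\le|U_i|^{\alpha'}$ (since $|U_i|\le 1$ and $d-s>\alpha'$). A union bound yields $\PP(E\cap F\ne\emptyset)\le C\varepsilon$ for every $\varepsilon>0$, proving (1).

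For (2) and the lower bound in (3), I argue via a second moment/energy method. Pick $t\in(0,\alpha+s-d)$ and, by Frostman's lemma, a probability measure $\nu$ on $F$ with $\iint|x-y|^{-(t+d-s)}d\nu(x)d\nu(y)<\infty$. Define $\tilde\mu_n=(M^d/N)^n\,\nu|_{E_n}$. The two-point estimate gives, uniformly in $n$,
\[
\EE[\tilde\mu_n(\RR^d)]=1,\quad \EE[\tilde\mu_n(\RR^d)^2]\le C\iint|x-y|^{-(d-s)}d\nu\,d\nu,\quad \EE\Big[\iint|x-y|^{-t}d\tilde\mu_n d\tilde\mu_n\Big]\le C\iint|x-y|^{-(t+d-s)}d\nu\,d\nu,
\]
all finite. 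Since $\PP(x\in E_{n+1}\mid\text{levels }1,\dots,n)=(N/M^d)\mathbf{1}_{x\in E_n}$, a direct check shows $\tilde\mu_n$ is a nonnegative measure-valued martingale, hence has a weak limit $\mu^*$ supported in $E\cap F$ by standard arguments. Uniform $L^2$-boundedness of the total mass forces uniform integrability, so $\EE[\mu^*(\RR^d)]=1$ and Paley--Zygmund yields $\PP(\mu^*(\RR^d)>0)>0$, proving (2). Fatou applied to the energy estimate gives $\iint|x-y|^{-t}d\mu^*d\mu^*<\infty$ almost surely, so on $\{\mu^*\ne0\}$ Frostman's lemma forces $\dim_H(E\cap F)\ge t$; letting $t\uparrow\alpha+s-d$ gives $\|\dim_H(E\cap F)\|_\infty\ge\alpha+s-d$.

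For the upper bound in (3), I argue via a first-moment estimate on Hausdorff content. Fix $t>\alpha+s-d$ and set $\beta=t+d-s>\alpha$; since $\mathcal H^\beta(F)=0$, cover $F$ by $\{U_i\}$ with $\sum|U_i|^\beta<\varepsilon$ and $M^{-n_i}\asymp|U_i|$. Each $E\cap U_i$ lies in $O(1)$ cubes $Q\in\D_{n_i}$, each in $E_{n_i}$ with probability $(N/M^d)^{n_i}$, so covering $E\cap Q$ by $Q$ itself gives $\EE[\mathcal H^t_\infty(E\cap U_i)]\le C\,(N/M^d)^{n_i}(M^{-n_i})^t\asymp|U_i|^\beta$. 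Summing, $\EE[\mathcal H^t_\infty(E\cap F)]\le C\varepsilon$, so $\mathcal H^t(E\cap F)=0$ and $\dim_H(E\cap F)\le t$ almost surely; sending $t\downarrow\alpha+s-d$ closes the essential-supremum equality. The main obstacle will be the two-point estimate itself: sampling without replacement positively correlates $\{x\in E_n\}$ and $\{y\in E_n\}$ through the shared-ancestor chain, and extracting the sharp exponent $|x-y|^{-(d-s)}$ is what drives both the Paley--Zygmund step in (2)-(3) and the Frostman-type energy/Hausdorff-content bounds above.
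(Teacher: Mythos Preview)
Your proposal is correct and follows essentially the same route as the paper: a first-moment/covering argument for (1), a martingale with a two-point kernel bound for (2), and the energy method for the lower bound in (3). Your two-point estimate is exactly the specialisation of the paper's Lemma~\ref{lem:twopoint} to $M_k\equiv M$, $N_k\equiv N$; note incidentally that the splitting step satisfies $N(N-1)/(M^d(M^d-1))\le (N/M^d)^2$ (sampling without replacement is \emph{negatively} correlated at a fixed level), so the only source of positive dependence is the shared-ancestor chain, just as you use it.

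One genuine addition on your side: you supply an explicit upper bound for (3) via an expected Hausdorff-content argument ($\beta=t+d-s$, $\EE[\mathcal H^t_\infty(E\cap F)]\lesssim\sum_i|U_i|^\beta$). The paper's proof of (3) only writes out the lower bound and declares the proof complete; your first-moment step is the clean way to close the equality and is worth keeping.
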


Note that under the condition $\sup_{k\in \NN}M_k<\infty$, it will become much easier to prove Theorem \ref{thm:dimension}, Theorem \ref{thm:typical}, and Theorem\ref{thm:localdimension}. Our main contribution of this project is to deal with the case when $\{N_k\}_{k\in \NN}$ is unbounded.

The paper is organised as follows. In Section \ref{section:lemma} we will show some lemmas for later use. Theorems \ref{thm:dimension}, \ref{thm:typical}, \ref{thm:localdimension}, and \ref{thm:hitting} are proved in Sections \ref{section:dimension}, \ref{section:typical}, \ref{section:localdimension}, and \ref{section:hitting} respectively. We conclude  with additional results and open problems in Section \ref{section:remarks}.

\section{Preliminary lemmas}\label{section:lemma}

We show some useful lemmas in this section.

\begin{lemma}\cite[Proposition 2.3]{Falconer1997}\label{lem:lp}
Let $E\subset \RR^{d}$ be a Borel set and let $\mu$ be a finite measure. If $\udim(\mu,x)\geq s$ for all $x \in E$ and $\mu(E)>0$ then $\dim_P E \geq s.$
\end{lemma}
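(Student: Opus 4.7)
The plan is to work from the countable-stability definition of the packing dimension: for any decomposition $E=\bigcup_{i\in\NN}E_{i}$, some piece $E_{i_{0}}$ must carry a positive share of $\mu$, and for that piece I will show $\udim_{B}E_{i_{0}}\geq s-2\varepsilon$ by a mass-distribution-style estimate along the dyadic scale $2^{-n}$; then sending $\varepsilon\to 0$ will finish the argument.

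Fix $\varepsilon>0$. Starting from the hypothesis $\udim(\mu,x)\geq s$, for each $x\in E$ there are arbitrarily small $r$ with $\mu(B(x,r))\leq r^{s-\varepsilon/2}$. Restricting such an $r$ to the dyadic range $[2^{-n-1},2^{-n}]$ and enlarging the ball slightly, one obtains: for every $x\in E$, there are infinitely many integers $n$ with $\mu(B(x,2^{-n}))\leq 2^{-n(s-\varepsilon)}$. Setting
\[
A_{n}=\bigl\{x\in E:\mu(B(x,2^{-n}))\leq 2^{-n(s-\varepsilon)}\bigr\},
\]
this says exactly $E\subset\limsup_{n}A_{n}$.

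Now fix any decomposition $E=\bigcup_{i\in\NN}E_{i}$. Since $\mu(E)>0$, pick $i_{0}$ with $\mu(E_{i_{0}})>0$ and write $F=E_{i_{0}}$; replacing $F$ by its closure changes neither $\mu(F\cap E)$-positivity at the relevant level nor $\udim_{B}F$. Argue by contradiction: suppose $\udim_{B}F<s-2\varepsilon$. Then for all sufficiently large $n$, $F$ can be covered by at most $2^{n(s-3\varepsilon/2)}$ balls of radius $2^{-n-1}$. Any such ball $B$ that meets $A_{n}$ at a point $x$ is contained in $B(x,2^{-n})$, so $\mu(B)\leq 2^{-n(s-\varepsilon)}$. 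Summing over the covering balls that meet $A_{n}$ gives
\[
\mu(F\cap A_{n})\leq 2^{n(s-3\varepsilon/2)}\cdot 2^{-n(s-\varepsilon)}=2^{-n\varepsilon/2}.
\]
Since $F\cap E\subset\bigcup_{n\geq N}A_{n}$ for every $N$ (from $E\subset\limsup_{n}A_{n}$),
\[
0<\mu(F\cap E)\leq\sum_{n\geq N}\mu(F\cap A_{n})\leq\sum_{n\geq N}2^{-n\varepsilon/2},
\]
and the right-hand side tends to $0$ as $N\to\infty$, a contradiction. Hence $\udim_{B}E_{i_{0}}\geq s-2\varepsilon$, so $\sup_{i}\udim_{B}E_{i}\geq s-2\varepsilon$. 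The decomposition being arbitrary, $\dim_{P}E\geq s-2\varepsilon$, and sending $\varepsilon\to 0$ concludes the proof.

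The main delicate step is the quantisation: passing from the continuous hypothesis $\udim(\mu,x)\geq s$ to the clean discrete inclusion $E\subset\limsup_{n}A_{n}$ at scale $2^{-n}$ while losing only an $\varepsilon$ in the exponent. Once this is correctly set up and the radius of the covering balls is tuned to $2^{-n-1}$ (so that each ball meeting $A_{n}$ sits inside the right $\mu$-small ball centred at a point of $A_{n}$), the tail estimate and the resulting contradiction fall out routinely.
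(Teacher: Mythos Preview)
Your argument is correct. The paper itself does not prove this lemma at all: it is quoted verbatim as \cite[Proposition 2.3]{Falconer1997} and used as a black box, so there is nothing in the paper to compare your proof against. What you have written is essentially the standard proof (and is in the spirit of Falconer's original): pass from the pointwise $\limsup$ hypothesis to the discrete layers $A_n$ losing only an $\varepsilon$, then for any countable decomposition pick a piece of positive mass and run a covering--plus--tail estimate to contradict a too-small upper box dimension.

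Two small remarks that do not affect correctness. First, in the decomposition $E=\bigcup_i E_i$ the pieces $E_i$ need not be $\mu$-measurable; your argument still goes through verbatim with outer measure $\mu^*$ in place of $\mu$, since the covering by balls and the inclusion $F\cap E\subset\bigcup_{n\ge N}(F\cap A_n)$ yield the same tail bound via countable subadditivity. Second, the aside about replacing $F$ by its closure is harmless but unnecessary: $\udim_B$ is already closure-stable, and the covering bound applies to $F$ directly.
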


\begin{lemma}\cite[Corollary 3.9]{Falconer2003}\label{lem:bp}
Let $E \subset \RR^d$ be a compact and such that $\udim_B ( E\cap V ) = \udim_B E$ for all open sets $V$ that intersects $E$. Then $\dim_P E= \udim_B E$.
\end{lemma}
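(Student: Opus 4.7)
The plan is to prove the two inequalities $\dim_P E \le \udim_B E$ and $\dim_P E \ge \udim_B E$ separately. The first is automatic: taking the trivial decomposition $E = E$ in the definition of packing dimension yields $\dim_P E \le \udim_B E$ with no hypothesis on $E$. The whole content of the lemma lies in the reverse inequality, where the compactness of $E$ and the self-similarity hypothesis $\udim_B(E\cap V)=\udim_B E$ must both enter. The natural vehicle for combining these two ingredients is the Baire category theorem, applied to $E$ viewed as a complete metric space in its own right.

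To prove $\dim_P E \ge \udim_B E$, I proceed by contradiction. Suppose there is a countable decomposition $E = \bigcup_{n=1}^{\infty} F_n$ with
\[
s := \sup_n \udim_B F_n < \udim_B E.
\]
Since $\udim_B F = \udim_B \overline{F}$ and $E$ is closed, I can replace each $F_n$ by $\overline{F_n}\cap E$ without changing the supremum, so I may assume that every $F_n$ is a closed subset of $E$. The crucial step is then the observation that each $F_n$ must be nowhere dense in $E$: if $F_n$ contained a nonempty relatively open set of the form $E\cap V$ with $V\subset\RR^d$ open, then monotonicity of $\udim_B$ together with the hypothesis would give
\[
\udim_B F_n \;\ge\; \udim_B(E\cap V) \;=\; \udim_B E \;>\; s,
\]
contradicting the definition of $s$. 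Hence $E$ is expressed as a countable union of closed nowhere dense subsets of itself. But $E$ is a nonempty compact, hence complete, metric space, and so by Baire's theorem it cannot be meager in itself; this is the desired contradiction.

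The main obstacle I anticipate is purely bookkeeping rather than conceptual: one has to be a little careful that the replacement $F_n \mapsto \overline{F_n}\cap E$ is legitimate (it does not increase $\udim_B F_n$ and does not uncover $E$), and that ``nowhere dense in $E$'' is the correct notion to invoke Baire category, rather than nowhere dense in $\RR^d$. Once this internal topology on $E$ is used consistently, the dichotomy between ``$F_n$ has interior in $E$'' (ruled out by the homogeneity assumption) and ``$F_n$ is nowhere dense in $E$'' (ruled out collectively by Baire) closes the argument.
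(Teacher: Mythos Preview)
Your argument is correct and is precisely the standard proof via the Baire category theorem. Note, however, that the paper does not give its own proof of this lemma: it is quoted as \cite[Corollary 3.9]{Falconer2003} and used as a black box, so there is nothing in the paper to compare against. Your write-up matches the argument in Falconer's book essentially line for line, including the reduction to closed $F_n$ and the dichotomy ``interior in $E$'' versus ``nowhere dense in $E$''.
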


For convenience we put an easy fact about Assouad dimension as the following lemma. For further basic facts on Assouad dimension, we refer to \cite{Luukkainen, PengWangWen}.

\begin{lemma}\label{lem:wen}
Let $E \subset \RR^d$. If there are sequences $\{R_n\}$ and $\{r_n\}$ of positive real numbers with $R_n / r_n\rightarrow \infty$ as $n\rightarrow \infty$, such that for every $n$ there exists $x\in E$ with 
\[
\mathcal{N}\left(E\cap B\left(x, R_n \right), r_n \right) \geq \left(\frac{R_n}{r_n} \right)^{s}, 
\]
then we have $dim_A E \geq s$.
\end{lemma}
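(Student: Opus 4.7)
The plan is a short proof by contradiction directly from the definition of Assouad dimension recalled in the introduction. Suppose for a contradiction that $\dim_A E < s$, and pick $t$ with $\dim_A E < t < s$. Then by definition there is a constant $C > 0$ such that for every $x \in E$ and every pair $0 < r < R \leq \sqrt{d}$,
\[
\mathcal{N}(E \cap B(x, R), r) \leq C \left( \frac{R}{r} \right)^t.
\]

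Before applying this to the given sequences, I would first reduce to the case $R_n \leq \sqrt{d}$. The hypothesis only constrains the ratio $R_n / r_n$, and the $\sqrt{d}$ threshold in the definition of $\dim_A$ is not essential: an equivalent definition allows any fixed positive upper bound on $R$, at the cost of enlarging the constant $C$. Concretely, if some $R_n$ exceeds $\sqrt{d}$, one covers $B(x, R_n)$ by a collection of balls of radius $\sqrt{d}$ whose cardinality depends only on $d$ and $R_n$, replaces each centre by a nearby point of $E$ when the intersection with $E$ is nonempty, and applies the above estimate to each. This step is purely geometric and is the only place where any care is needed.

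After this reduction, for each sufficiently large $n$ the hypothesis supplies a point $x_n \in E$ with
\[
\left(\frac{R_n}{r_n}\right)^s \leq \mathcal{N}(E \cap B(x_n, R_n), r_n) \leq C \left(\frac{R_n}{r_n}\right)^t,
\]
whence $(R_n / r_n)^{s - t} \leq C$. Since $s - t > 0$ and $R_n / r_n \to \infty$, the left-hand side tends to infinity, contradicting the boundedness by $C$. Thus $\dim_A E \geq s$, as desired. I do not anticipate any substantive obstacle beyond the mild reduction to $R_n \leq \sqrt{d}$.
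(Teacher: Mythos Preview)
The paper does not prove this lemma at all; it is stated as ``an easy fact about Assouad dimension'' and left without proof. Your contradiction argument from the definition is exactly the standard one and is correct.

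One remark on the reduction step you flag. Your covering argument handles the case where the $R_n$ are bounded (the number of balls of radius $\sqrt{d}$ needed to cover $B(x,R_n)$ is then uniformly bounded), but as written it does not dispose of the case $R_n\to\infty$, since then the cardinality of the cover grows with $n$. In fact, with the paper's definition of $\dim_A$ (which caps $R$ at $\sqrt{d}$), the lemma is literally false in that regime: take $E=\mathbb{Z}\subset\mathbb{R}$, $R_n=n$, $r_n=1$, and one would conclude $\dim_A\mathbb{Z}\ge 1$, whereas $\dim_A\mathbb{Z}=0$ under this definition. This is a looseness in the lemma's statement rather than a flaw in your argument; in every application the paper makes of the lemma one has $R_n\le 1\le\sqrt{d}$ (indeed $R_n\to 0$), so your reduction step is not even needed there and the core contradiction argument goes through directly.
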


The following estimate will be used in the proof of Lemma \ref{cor:intersection}.
\begin{lemma}\label{lem:erdos}
Let  $A=\{1,\cdots, k\}, B= \{1,\cdots, m\}, 1 \leq k \leq m$. Now we randomly choose $n ~ (n\leq m)$ numbers from $B$ in the same way as our construction of random Cantor sets (we randomly choose a number from $B$ uniformly, then we randomly choose an other number among the remaining $m-1$ numbers uniformly, and continue this process until we obtain $n$numbers). Let $K$ be the random chosen $n$ numbers, then 
\[
\PP(A \cap K \neq \emptyset) \geq 1- e^{-\frac{nk}{m}}.
\]
\end{lemma}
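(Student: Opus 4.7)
The plan is to compute $\PP(A \cap K = \emptyset)$ exactly by a combinatorial argument, then bound the resulting product using the elementary inequality $1-x \le e^{-x}$.

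First I would observe that the sequential sampling procedure described (pick uniformly, then uniformly from the remaining, and so on) yields a random set $K$ that is uniformly distributed over all $\binom{m}{n}$ subsets of $B$ of size $n$. This is because any specific ordered sequence of $n$ distinct elements is produced with probability $\frac{1}{m(m-1)\cdots(m-n+1)}$, so each unordered $n$-subset arises with probability $\frac{n!}{m(m-1)\cdots(m-n+1)} = 1/\binom{m}{n}$.

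Next, I would split on cases. If $n > m-k$, then by pigeonhole every $n$-subset of $B$ must meet $A$, so $\PP(A \cap K \neq \emptyset) = 1$ and the bound is trivial. Otherwise $n \le m-k$, and the event $\{A \cap K = \emptyset\}$ coincides with $\{K \subseteq B \setminus A\}$, so
\[
\PP(A \cap K = \emptyset) = \frac{\binom{m-k}{n}}{\binom{m}{n}} = \prod_{i=0}^{n-1} \frac{m-k-i}{m-i}.
\]
For each $0 \le i \le n-1$ one has $m-i \le m$, hence
\[
\frac{m-k-i}{m-i} = 1 - \frac{k}{m-i} \le 1 - \frac{k}{m}.
\]
Applying $1-x \le e^{-x}$ with $x = k/m \in [0,1]$ gives
\[
\PP(A \cap K = \emptyset) \le \left(1 - \frac{k}{m}\right)^n \le e^{-nk/m},
\]
which is the stated conclusion after taking complements.

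This is a purely elementary step whose role is to quantify, in a single clean inequality, how the random sampling without replacement distributes mass across targets; there is no real obstacle. The only minor point requiring attention is the case separation $n > m-k$ versus $n \le m-k$, so that the binomial coefficient $\binom{m-k}{n}$ in the formula is well-defined (and nonzero) when invoked.
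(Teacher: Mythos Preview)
Your proof is correct and essentially coincides with the paper's argument. The paper also separates the case $k+n>m$ (your $n>m-k$), then computes $\PP(A\cap K=\emptyset)$ as the same product $\prod_{j=0}^{n-1}\bigl(1-\tfrac{k}{m-j}\bigr)$, obtained there directly via the conditional probabilities of the sequential picks rather than by first identifying $K$ as a uniform $n$-subset, and bounds it by $e^{-nk/m}$ using $1-x\le e^{-x}$; the two presentations differ only cosmetically.
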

\begin{proof}
Note that the random set $K$ will intersect (hit) the set $A$ with probability one when $k+n >m.$ In the following, we assume that $k+n \leq m$. Let $K=\{x^1,\cdots, x^n\}$ where $x^i$ means the $i$-th chosen number. Let $A_i$ be the event $\{K:x^i \in B \backslash A \}$, then 
\begin{equation}
\begin{aligned}
\PP(\bigcap^n_{i=1} A_i)&=\PP(A_1) \prod^n_{i=2} \PP(A_{i} \,\big|\, \bigcap^{i-1}_{j=1}A_j)\\
& =\prod^{n-1}_{j=0} (1-\frac{k}{m-j})\\
&\leq e^{-\frac{nk}{m}}.
\end{aligned}
\end{equation}
By the fact that the event $(A \cap K \neq \emptyset )$ is the complement of the event $\bigcap^n_{i=1} A_i$, we complete the proof.
\end{proof}

The following estimate will be used in the proof of Lemma \ref{lem:box}. For more details on large deviations estimates, see \cite[Appendix A]{Alon}.
\begin{lemma}\label{lem:law of large numbers}
Let $\{X_i\}_{i=1}^n$ be a sequence nonegative independent random variables with $ X_i \leq N$ and  $\EE(X_i)\geq N/2$ for all $1\leq i \leq n$. Then 
\[ 
\PP( \sum^n_{i=1} X_i < N n/ 8)\leq e^{-n/8}.
\]
\end{lemma}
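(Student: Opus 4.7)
The plan is to reduce the claim to a standard Chernoff-type lower-tail estimate on normalized random variables. First I would set $Y_i := X_i/N$, so that $Y_i \in [0,1]$ and $\EE[Y_i]\ge 1/2$, and observe that the event $\{\sum_{i=1}^n X_i < Nn/8\}$ coincides with $\{\sum_{i=1}^n Y_i < n/8\}$. Since $\EE\bigl[\sum_{i=1}^n Y_i\bigr]\ge n/2$, this is a deviation by a constant multiplicative factor below the mean, which is precisely the regime in which the exponential Markov inequality produces exponential decay. Independence of the $X_i$ carries over to independence of the $Y_i$, so the moment generating function of the sum factors.

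The core computation is then the standard one. For any $t>0$, exponential Markov gives
\[
\PP\Bigl(\sum_{i=1}^n Y_i < n/8\Bigr) \le e^{tn/8}\prod_{i=1}^n \EE[e^{-tY_i}].
\]
Using the convexity of $y\mapsto e^{-ty}$ on $[0,1]$ one has the pointwise bound $e^{-tY_i}\le 1 - Y_i(1-e^{-t})$, so $\EE[e^{-tY_i}] \le 1 - (1-e^{-t})/2 = (1+e^{-t})/2 \le \exp((e^{-t}-1)/2)$, the last step from $1+x\le e^x$. Multiplying the $n$ independent factors and optimizing in $t$ (the minimum is attained at $t=\ln 4$) yields a bound of the form $\exp(-n(3-2\ln 2)/8)$.

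The only thing to check is that the constants line up with the stated $e^{-n/8}$. Since $2\ln 2<2$, one has $3-2\ln 2 > 1$, so the exponent above is at most $-n/8$, completing the argument. I do not expect any real obstacle here: the hypotheses, namely independence, one-sided boundedness $X_i\le N$, and the lower mean bound $\EE[X_i]\ge N/2$, are exactly what a one-sided Chernoff estimate requires, and no structure of the random Cantor construction enters. As an alternative one could invoke Hoeffding's inequality directly on $Y_i\in[0,1]$ with deviation $3n/8$, which yields the cruder but still sufficient bound $e^{-9n/32}\le e^{-n/8}$; I would pick whichever formulation matches the reference to \cite[Appendix A]{Alon} most cleanly.
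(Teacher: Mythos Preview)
Your proposal is correct and follows essentially the same exponential-Markov/Chernoff approach as the paper. The only cosmetic difference is that the paper fixes the parameter $\lambda=1/N$ (i.e.\ your $t=1$) from the outset and uses the inequality $e^{-t}\le 1-t/2$ on $[0,1]$ to obtain $\EE(e^{-\lambda X_i})\le e^{-1/4}$ directly, landing exactly on $e^{-n/8}$ without an optimization step.
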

\begin{proof}
Let $\lambda =1/N$. We apply Markov's inequality to the random variable $e^{-\lambda \sum^n_{i=1} X_i}$. This gives 
\begin{equation}\label{eq:markov}
\begin{aligned}
\PP( \sum^n_{i=1} X_i < Nn/8)&=\PP (e^{-\lambda \sum^n_{i=1} X_i}  > e^{-n/8})\\
& \leq  e^{n/8} \EE(e^{-\lambda \sum^n_{i=1} X_i})\\
&=e^{n/8} \prod^n_{i=1} \EE( e^{-\lambda X_i}),
\end{aligned}
\end{equation}
the last equality holds since $\{X_i\}_i$ is a sequence independent random variables. 

For any  $t \in [0,1]$ we have  
 \[
 e^{-t} \leq 1-t/2.
 \] 
Since $\lambda X_i \in [0, 1]$ for all $1\leq i \leq n$, we have that for all $1\leq i \leq n,$
 \[
 e^{-\lambda X_i}\leq 1-\lambda X_i/2, 
 \]
 and hence 
\[
\EE( e^{-\lambda X_i}) \leq 1- \EE(\lambda X_i/2)\leq e^{-1/4}.
\]
Combining this with \eqref{eq:markov}, we finish the proof.
\end{proof}

\section{Bounds on dimensions and almost sure dimension}\label{section:dimension}

\begin{proof}[Proof Theorem \ref{thm:dimension} (1)] For any $E\in \Omega$ and $k\in \NN$, we have $\mathcal{N}(E, r_k)\leq P_k$, and hence 
\[
\ldim_B E \leq \frac{\log P_k}{-\log r_k} = s_1. 
\]

For convenience, let $\ell_k=r_{k+1}N_{k+1}^{\frac{1}{d}}, k \in \NN$.
Suppose $t^{*}>0$ ($t^{*}=0$ is the trivial case). For any $0<t<t^{*}$, by the definition of $t^{*}$, there exist $k_0$ such that for any $k\geq k_0$,
\begin{equation}\label{eq:t_1}
P_k \geq \ell_k^{-t}.
\end{equation}

Let $E \in \Omega$ and $\mu$ be the natural measure on $E$. We intend to show that $\mu (B(x,r)) \leq C r^t$ for any ball $B(x,r)$ with $r\leq r_{k_0}$ where $C$ is a constant. For $0<r\leq r_{k_0}$, there exists $k$ such that $r_{k+1}< r\leq r_k$. 

Case 1. $\ell_k\leq r\leq  r_k$. In this case, the ball $B(x,r)$ intersects at most $3^d$ cubes of $E_k$, hence 
\begin{equation}
\mu(B(x,r)) \leq 3^dP_k^{-1} \leq 3^d \ell_k^t \leq 3^dr^t
\end{equation}

Case 2. $r_{k+1}<r<\ell_k$. In this case, observe that there exists a constant $C=C(d)$ such that any ball $B(x,r)$ can intersects at most $C(\frac{r}{r_{k+1}})^d$ cubes of $E_{k+1}$, and hence 
\begin{equation}
\mu(B(x,r))\leq C\left(\frac{r}{r_{k+1}}\right)^d N_{k+1}^{-1}P_k^{-1}\leq Cr^d   \ell_k^{t-d} \leq C r^t.
\end{equation}
Thus the mass distribution principle \cite[Chapter 4]{Falconer2003} implies  that $\dim_H E \geq t$. Since this holds for any $t<t^{*}$, we obtain that $\dim_H E \geq t^{*}$. 
\end{proof}


\begin{proof}[Proof of Theorem \ref{thm:dimension} (2)] 
 For each $k\in \NN$, let $\ell_{k+1}=r_k/(N_{k+1})^{\frac{1}{d}}$. For any $\delta>0$, there exists $k$ such that $r_{k+1}<\delta\leq r_k.$ 

Case 1.  $r_{k+1}<\delta < \ell_{k+1}$.
In this case we have $\mathcal{N}(E,\delta)\leq P_{k+1}$, and hence
\begin{equation}\label{eq:boxcase1}
\frac{\log N(E,\delta)}{-\log \delta} \leq \frac{\log P_{k+1}}{-\log \delta} \leq \frac{\log P_{k+1}}{-\log     \ell_{k+1}}.
\end{equation}

Case 2. $  \ell_{k+1}\leq \delta \leq r_k$.
In this case, we have  $N(E,\delta)\leq C  P_k  \left( r_k / \delta \right)^d$.
Thus 
\begin{equation}
\begin{aligned} \label{eq:boxcase2}
\frac{\log N(E,\delta)}{-\log \delta} &\leq \frac{\log P_{k} r_k^d}{-\log \delta}+d+\frac{\log C}{-\log \delta}\\
&\leq \frac{\log P_{k} r_k^d}{-\log    \ell_{k+1}}+d+\frac{\log C}{-\log r_k}\\
& = \frac{\log P_{k+1}}{-\log    \ell_{k+1}}+\frac{\log C}{-\log r_k}.
\end{aligned}
\end{equation}
Taking the upper limit of \eqref{eq:boxcase1} and \eqref{eq:boxcase2}, we obtain that 
\[ 
\udim_B E \leq s_2.
\]

Suppose $s^{*}>0$.  For any $t<s^{*}$ there exists a sequence of numbers $\{k_j\}_{j\geq 1} \subset \NN$ with $k_j \rightarrow \infty$ as $j \rightarrow \infty$, such that $P_{k_j} \geq r_{k_j}^{-t}$ for all $j\in \NN.$
Let $x \in E,$ then we have 
\[
\mu(B(x, r_{k_j})) \leq 3^d P_{k_j}^{-1}\leq 
3^dr_{k_j}^{t},
\]
and hence $\udim(\mu,x)\geq t$. Since this holds for all $x\in E$, together with Lemma \ref{lem:lp}  we have that $\dim_P E \geq t$. By the arbitrary choice of $t<s^{*}$, we obtain that $\dim_PE \geq s^{*}$. Thus we complete the proof. 
\end{proof}

\subsection{Almost sure Hausdorff and lower box dimensions}

Let $\partial Q$ be the boundary of $Q$, define 
\[
\widetilde{B}_n=\bigcup_{Q \in \D_n} \partial Q \text{ and } \widetilde{B}=\bigcup_{n\in \NN} \widetilde{B}_n.
\]
It is clear that $\widetilde{B}$ has zero Lebesgue measure and for any $x \in [0,1]^d\backslash \widetilde{B}$, 
\[
\PP(x \in E_n)=p_n.
\] 
Recall that $p_n=P_nr_n^{d}$. For the purpose of estimating the lower bound for Hausdorff dimension, we need the following estimate. 

\begin{lemma}\label{lem:twopoint}
For any $ \varepsilon > 0$ there exists positive constant $C=C(\varepsilon, d)$, such that 
\begin{equation}\label{eq:twopoint}
\frac{\PP( x \in E_{n}, y \in E_{n})}{p_{n}^{2}} \leq C d(x,y)^{s_1-d-\varepsilon}
\end{equation}
for all $n \in \NN$ and $x, y \in [0,1]^d$.
\end{lemma}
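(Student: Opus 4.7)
The plan is to track the deepest level at which $x$ and $y$ still lie in a common cube of the construction, estimate the two-point probability at that level, and then use the definition of $s_1$ to convert the resulting level-dependent bound into a power of $d(x,y)$. Assume first that $x,y \notin \widetilde{B}$; boundary points are contained in the closures of at most $2^d$ cubes at every level, which only affects the constant $C$. Let $k = k(x,y)$ be the largest non-negative integer such that some $Q \in \D_k$ contains both $x$ and $y$. Since $Q$ has side length $r_k$, this immediately gives the key geometric inequality $d(x,y) \leq \sqrt{d}\, r_k$.

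Next I would bound $\PP(x \in E_n, y \in E_n)$ by conditioning along the common ancestry. Writing $Q_x, Q_y \in \D_{k+1}$ for the two distinct sub-cubes of $Q$ containing $x$ and $y$, the construction yields $\PP(Q \in E_k) = p_k$ and, conditional on $Q \in E_k$, the probability that both $Q_x$ and $Q_y$ are retained equals
\[
\frac{N_{k+1}(N_{k+1}-1)}{M_{k+1}^d(M_{k+1}^d-1)} \;\leq\; \left(\frac{N_{k+1}}{M_{k+1}^d}\right)^{2} \;=\; \left(\frac{p_{k+1}}{p_k}\right)^{2},
\]
where the inequality is the standard negative correlation of sampling without replacement (using $N_{k+1} \leq M_{k+1}^d$). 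Conditional on $Q_x, Q_y \in E_{k+1}$, the events $\{x \in E_n\}$ and $\{y \in E_n\}$ are \emph{genuinely} independent, since they depend on the disjoint sub-constructions inside $Q_x$ and $Q_y$, and each has conditional probability $p_n/p_{k+1}$. Multiplying these three factors gives $\PP(x \in E_n, y \in E_n) \leq p_n^{2}/p_k$, hence $\PP(x,y \in E_n)/p_n^{2} \leq 1/p_k$.

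For the final step I would invoke the definition \eqref{s_1} of $s_1$. Given $\varepsilon > 0$, there is $n_0 = n_0(\varepsilon)$ with $P_k \geq r_k^{-(s_1-\varepsilon)}$ for every $k \geq n_0$, and hence $1/p_k = r_k^{-d}/P_k \leq r_k^{s_1-d-\varepsilon}$. Since $P_n \leq r_n^{-d}$ forces $s_1 \leq d$, the exponent $s_1-d-\varepsilon$ is strictly negative, so applying a negative power to $r_k \geq d(x,y)/\sqrt{d}$ yields
\[
r_k^{s_1-d-\varepsilon} \;\leq\; d^{(d+\varepsilon-s_1)/2}\, d(x,y)^{s_1-d-\varepsilon}.
\]
For the finitely many levels $k < n_0$, the quantity $1/p_k$ is bounded by a constant depending only on $n_0$, while $d(x,y)^{s_1-d-\varepsilon}$ has a uniform positive lower bound on $\{d(x,y) \leq \sqrt{d}\}$ (the negative exponent is maximised in absolute value at the upper endpoint), so these cases are absorbed into a final constant $C = C(\varepsilon, d)$.

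The main obstacle is organising the conditioning so that both ingredients are used cleanly at once: negative correlation of the without-replacement sampling at the common ancestor level, together with the genuine independence of choices in distinct level-$(k+1)$ cubes guaranteed by the construction. These are precisely what collapse the two-point correlation to the factor $1/p_k$, which is in turn exactly the quantity whose polynomial decay rate is encoded by $s_1$. Once this structural reduction is in place, the rest is a matching of exponents through the geometric relation $d(x,y) \lesssim r_k$.
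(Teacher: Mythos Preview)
Your approach is essentially the same as the paper's: both reduce the two-point probability to the bound $\PP(x\in E_n,\,y\in E_n)\le p_n^2/p_k$, where $k$ marks the level at which $x$ and $y$ separate, and then convert $1/p_k$ into a power of $d(x,y)$ via the definition of $s_1$. Your explicit use of the negative-correlation inequality $\tfrac{N(N-1)}{M(M-1)}\le (N/M)^2$ for sampling without replacement is in fact cleaner than the paper's somewhat terse justification of the corresponding step.

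There is one small gap you should patch. Your factorization into three factors (the common ancestor, the split at level $k+1$, and the independent descents) tacitly requires $n\ge k+1$; when $n\le k$ the conditional probability $p_n/p_{k+1}$ is meaningless and the product identity fails. In that regime $x$ and $y$ lie in the same cube of $\D_n$, so $\PP(x,y\in E_n)=p_n$ and the ratio is $1/p_n$ rather than $1/p_k$. This is exactly the paper's ``Subcase 1'' ($d(x,y)\le\sqrt{d}\,r_n$), and it is handled by the same mechanism you already use: for $n\ge n_0$ one has $1/p_n\le r_n^{\,s_1-d-\varepsilon}$ and $r_n\ge r_k\ge d(x,y)/\sqrt{d}$, while for $n<n_0$ the quantity $1/p_n$ is bounded by a constant. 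Adding one sentence to cover this case makes the argument complete.
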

\begin{proof}
For any $ \varepsilon > 0$, by the definition of $s_1$, there exists $N \in \NN$, such that  $P_n\geq r_n^{-s_1+\varepsilon}$ for all $ n\geq N$ which implies that
\begin{equation}\label{eq:pp_n}
p_n=P_nr_n^{d}\geq r_n^{d-s_1+\varepsilon}.
\end{equation} 
We first assume that $x, y \in [0,1]^d\backslash \widetilde{B}$ (to make sure that for any $n\in \NN$ there exists unique $Q, Q' \in \D_n$ with $x\in Q$ and $y\in Q'$). There is  $k\in \NN$ such that 
\[
\sqrt{d}r_{k+1}<d(x,y)\leq \sqrt{d}r_k. 
\]
It follows that there exists two distinct cubes $Q_x, Q_y \in \D_{k+1}$ such that $x\in Q_x$ and $y\in Q_y$. Therefore, for any $n> k$ we have 
\begin{equation}
\begin{aligned}\label{eq:nnnn}
\mathbb{P}( x \in E_{n},  y \in E_{n}) &=  \mathbb{P}( y \in E_{n}\mid x \in E_{n})\mathbb{P}( x \in E_{n}) \\
&\leq p_n\prod^{n}_{i=k+1} N_iM_i^{-d}=p_n^{2}p_k^{-1}.
\end{aligned}
\end{equation}

Now we turn to the estimate of \ref{eq:twopoint}. Case 1. $n\leq N$. In this case we have
\[ 
\frac{ \mathbb{P}( x \in E_{n},   y \in E_{n})}{p_{n}^{2}} \leq p_n^{-2}\leq  p_N^{-2}\leq  p_N^{-2} (\sqrt{d})^{d-s_1+\varepsilon} d(x,y)^{s_1-d-\varepsilon}.
\]

Case 2. $n > N$. In this case there will appear three  subcases  depending on $d(x,y)$.

Subcase 1. $ d(x,y)\leq  \sqrt{d}r_{n}$. In this case, 
\[
\mathbb{P}( x \in E_{n},  y \in E_{n}) =  \mathbb{P}( y \in E_{n} \, |\, x \in E_{n})\mathbb{P}( x \in E_{n}) 
\leq p_n.
\]
Combining this with the estimate \eqref{eq:pp_n} we obtain
\begin{equation}
\begin{aligned}
\frac{ \mathbb{P}( x \in E_{n}, y  \in E_{n})}{p_{n}^{2}}& \leq  p_n^{-1} \leq r_n^{s_1-d-\varepsilon}\\
&\leq \sqrt{d}^{d-s_1+\varepsilon} d(x,y)^{s_1-d-\varepsilon}.
\end{aligned}
\end{equation}

Subcase 2. $d(x,y) > \sqrt{d}r_N$. Applying the estimate \eqref{eq:nnnn} we have
\begin{equation}
\begin{aligned}
 \frac{ \mathbb{P}( x \in E_{n},   y \in E_{n})}{p_{n}^{2}} &\leq p_k^{-1} \leq p_N^{-1}\\
 &\leq  p_N^{-1} (\sqrt{d})^{d-s_1+\varepsilon} d(x,y)^{s_1-d-\varepsilon}.
\end{aligned}
\end{equation}

Subcase 3.  $\sqrt{d}r_{n}< d(x,y)\leq \sqrt{d}r_{N}$. Applying the estimates \eqref{eq:pp_n} and \eqref{eq:nnnn}, we have 
\begin{equation}
\begin{aligned}
 \frac{ \mathbb{P}( x \in E_{n},   y \in E_{n})}{p_{n}^{2}} &\leq p_k^{-1} \leq r_k^{s_1-d-\varepsilon}\\
 &\leq \sqrt{d}^{d-s_1+\varepsilon}
 d(x,y)^{s_1-d-\varepsilon}.
\end{aligned}
\end{equation}

Let $C=p_N^{-2}\sqrt{d}^{d}$, then the estimate \eqref{eq:twopoint} holds for all $x, y \in [0,1]^d\backslash \widetilde{B}$. Note that for every point $x$ and $n\in \NN$ there exist at most $2^{d}$ cubes of $\D_n$ such that each of these cube contains $x$. It follows that for any $x, y \in [0,1]^d$
\[
\PP(x\in E_n, y\in E_n ) \leq  4^{d} \PP(Q_x  \in E_n, Q_y \in  E_n)
\]
where $Q_x , Q_y$ are two cubes of $\D_n$ which contain $x $ and $y$ separately. Thus there exists a larger constant such that the estimate \eqref{eq:twopoint} holds for all $x, y \in [0,1]^d$. For the convenience, we denote this larger constant also by $C$.
\end{proof}

\begin{proof}[Proof of Theorem \ref{thm:dimension} (3)]
By Theorem \ref{thm:dimension} (1), it is sufficient 
to prove that almost surely $\dim_H E \geq s_1$. For any $\varepsilon >0,$ there exists a positive constant $C$, such that Lemma \ref{lem:twopoint} holds.  Applying Lemma \ref{lem:twopoint}, Fatou's lemma, and Fubini's theorem, we obtain
\begin{equation}
\begin{aligned}\label{eq:ooo}
\EE  & \left(  \int  \int d(x,y)^{-s_1+2\varepsilon} d\mu(x) d\mu(y) \right) \\
&\leq\liminf_{k \rightarrow \infty}
\EE \left( \int \int d(x,y)^{-s_1+2\varepsilon}d\mu_k(x)d\mu_k(y) \right) \\
&=\liminf_{k \rightarrow \infty} \EE \left( \int \int d(x,y)^{-s_1+2\varepsilon}p_k^{-2}\textbf{1}_{E_k\times E_k}(x,y)dxdy \right) \\
&\leq C \int_{[0,1]^d} \int_{[0,1]^d} d(x,y)^{-s_1+2\varepsilon}d(x,y)^{s_1-d-\varepsilon}dxdy < \infty.
\end{aligned}
\end{equation}
This implies that a.s. 
\begin{equation}\label{eq:ttt}
\int  \int d(x,y)^{-s_1+2\varepsilon} d\mu(x) d\mu(y) <\infty.
\end{equation}
Thus by applying the energy argument \cite[Theorem 4.13]{Falconer2003}, we have that  almost surely $\dim_H E\geq s_1-2\varepsilon$. By the arbitrary choice of $\varepsilon$, we obtain that almost surely $\dim_H E \geq s_1.$ 
\end{proof}

\begin{remark}
Note that the estimate \eqref{eq:ttt} implies that $\ldim(\mu,x)\geq s_1-2\varepsilon$ for $\mu$ almost all $x\in E$, for a proof see the argument in \cite[Theorem 4.13]{Falconer2003}. Together with the estimate \eqref{eq:ooo} and the arbitrary choice of $\varepsilon$ we obtain that for $\PP$-almost all $E\in \Omega$, and $\mu$ almost every $x\in E$, we have $\ldim(\mu, x)\geq s_1$. As we claimed before in Remark \ref{rem:ttt}, we will present a different proof in Section \ref{section:localdimension}.
\end{remark}

\subsection{Almost sure packing and upper box dimensions} \label{ll}

For every $Q \in \D_k, k\in \NN$, we define the random set
\[
E_{k+1}(Q)=\{Q': Q' \subset Q, Q' \in E_{k+1}\}.
\]
Recall that $Q'\in E_{k+1}$ means that $Q' \in \D_{k+1}$ and $Q'\subset E_{k+1}$. In the following we are going to show that the set $E_{k+1}(Q)$ is fairly uniformly distributed (this motivated the formula of the upper box dimension).

Let $N^*_{k+1}= \lfloor N_{k+1}^{\frac{1}{d}} \rfloor ^d$ where $\lfloor x \rfloor$ denotes the integer part of $x$. For every $Q \in \D_k$, we divide it into $N^*_{k+1}$ interior disjoint closed subcubes with side length 
\begin{equation}\label{eq:delta'}
r^{*}_{k+1}=r_k/(N^*_{k+1})^\frac{1}{d},
\end{equation} 
and denote by $\C(Q, N^*_{k+1})$ the collection of these subcubes. For every $\widetilde{Q}\in \C( Q,N^*_{k+1})$, define
\[
I(Q, \widetilde{Q}, \D_{k+1})=\{ Q' \in \D_{k+1}: Q' \subset Q \text{ and } Q' \cap \widetilde{Q} \neq \emptyset\}.
\]
By a volume argument, we have 
\begin{equation}\label{eq:number}
\#I(Q, \widetilde{Q}, \D_{k+1})  \geq \frac{M_{k+1}^d}{N^*_{k+1}} 
\end{equation}
where $\# J$ denotes the cardinality of a set $J$.  and random variable
\begin{equation}\label{eq:randomdistributied}
X_Q =\# \{ \widetilde{Q} \in \C(Q, N^*_{k+1}) : \widetilde{Q} \cap E_{k+1}(Q) \neq \emptyset\}.
\end{equation}

Figure \ref{figure4} shows the relative position of the above geometric objects. 

\begin{figure}

\resizebox{0,4\textwidth}{!}{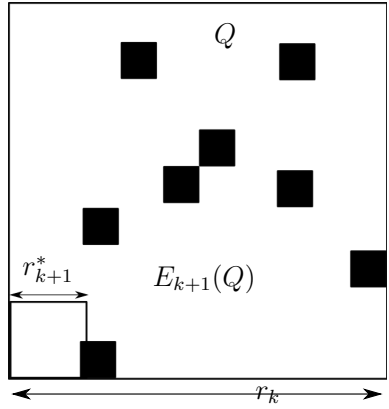}
\caption{ A cube $Q\in E_k$, the set $E_{k+1}(Q)$ consisting of the dark cubes, a cube $\widetilde{Q}\in \mathcal{C}(Q, N_{k+1}^*)$. }
\label{figure4} 
\end{figure}

\begin{lemma}\label{cor:intersection}
Let $Q \in \D_k$, then for every $\widetilde{Q} \in \C(Q, N^*_{k+1})$ we have 
\begin{equation}
\PP\left( \widetilde{Q} \cap  E_{k+1}(Q) \neq \emptyset  \, \Big|\, Q \in E_k \right)\geq 1/2, 
\end{equation}
and hence  
\begin{equation}
\EE \left( X_Q \,\Big|\, Q \in E_k \right)\geq N^*_{k+1}/2. 
\end{equation}
\end{lemma}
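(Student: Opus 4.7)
The plan is to reduce the first inequality to a direct application of Lemma \ref{lem:erdos}, and then derive the expectation bound via linearity. Conditionally on $Q\in E_k$, the collection $E_{k+1}(Q)$ is exactly the set obtained by sampling $N_{k+1}$ cubes uniformly without replacement from the $M_{k+1}^d$ level-$(k+1)$ subcubes of $Q$, so the conditional law is identical to the sampling procedure in Lemma \ref{lem:erdos}.

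First I would identify, for a fixed $\widetilde{Q}\in \C(Q,N^*_{k+1})$, the set $A$ of level-$(k+1)$ subcubes of $Q$ that meet $\widetilde{Q}$; by the volume estimate \eqref{eq:number}, $\#A \geq M_{k+1}^d / N^*_{k+1}$. Applying Lemma \ref{lem:erdos} with $m = M_{k+1}^d$, $n = N_{k+1}$, and $k = \#A$ gives
\[
\PP\bigl(\widetilde{Q}\cap E_{k+1}(Q)\neq \emptyset \,\big|\, Q\in E_k\bigr)\;\geq\; 1-\exp\!\left(-\frac{N_{k+1}}{N^*_{k+1}}\right).
\]
Since $N^*_{k+1}=\lfloor N_{k+1}^{1/d}\rfloor^d \leq N_{k+1}$, the ratio $N_{k+1}/N^*_{k+1}$ is at least $1$, so the right-hand side is at least $1-e^{-1}>1/2$, which proves the first assertion.

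For the expectation bound, I would write $X_Q = \sum_{\widetilde{Q}\in \C(Q,N^*_{k+1})} \mathbf{1}_{\{\widetilde{Q}\cap E_{k+1}(Q)\neq\emptyset\}}$, apply linearity of conditional expectation, and sum the inequality just obtained over all $N^*_{k+1}$ subcubes $\widetilde{Q}$ to get $\EE(X_Q \mid Q\in E_k)\geq N^*_{k+1}/2$.

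The only mildly delicate point is to verify that the hypotheses of Lemma \ref{lem:erdos} are satisfied in the degenerate cases, namely that $\#A \leq M_{k+1}^d$ (obvious) and that we correctly identify the conditional distribution of $E_{k+1}(Q)$ given $Q\in E_k$ with the sampling procedure in the lemma. This last identification follows from the construction: the choices inside each cube of $E_k$ are performed independently of the other cubes and of the history leading to $Q\in E_k$, so conditioning on $Q\in E_k$ leaves the distribution of $E_{k+1}(Q)$ unchanged. With this in hand, the rest is a short numerical check that $1-e^{-1}\geq 1/2$.
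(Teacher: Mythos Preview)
Your proposal is correct and follows essentially the same route as the paper's own proof: identify the conditional sampling inside $Q$ with the setup of Lemma~\ref{lem:erdos}, apply the volume bound \eqref{eq:number} to get the exponent $N_{k+1}/N^{*}_{k+1}\geq 1$, conclude $1-e^{-1}\geq 1/2$, and then sum over $\widetilde{Q}$ by linearity. Your explicit remark on why the conditioning on $Q\in E_k$ leaves the distribution of $E_{k+1}(Q)$ unchanged is a point the paper leaves implicit.
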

\begin{proof}
Applying  Lemma \ref{lem:erdos} for
\[
A(Q)= \{Q'\in \D_{k+1}: Q' \subset Q , Q' \cap \widetilde{Q} \neq \emptyset\},
\]
\[
B(Q)=\{Q'\in \D_{k+1}: Q'\subset Q\}),
\]
and the estimate \eqref{eq:number}, we obtain
\begin{equation}
\begin{aligned}
\PP ( \widetilde{Q} &\,\cap \, E_{k+1}(Q) \neq \emptyset  \, \big|\, Q \in E_k )\\ 
&\geq 1-\exp\left (-\frac{N_{k+1}}{M_{k+1}^d} \# I(Q, \widetilde{Q}, \D_{k+1}) \right)\\
&\geq 1-\exp \left(-\frac{N_{k+1}}{N^*_{k+1}}\right)\\
&\geq 1-e^{-1}\geq 1/2.
\end{aligned}
\end{equation}
It follows that 
\begin{equation}
\begin{aligned}
\EE(X_Q \, \big|\, Q \in E_k)
&= \sum_{\widetilde{Q} \in  \C(Q, N^*_{k+1}) }\PP \left(  \widetilde{Q} \cap  E_{k+1}(Q) \neq \emptyset  \, \Big|\, Q \in E_k \right)\\
& \geq N^*_{k+1}/2.
\end{aligned}
\end{equation}
Thus we complete the proof.
\end{proof}

The following proposition contains the second statement of Theorem \ref{thm:dimension} $(4)$.

\begin{proposition}\label{lem:box}
Almost surely $\udim_B E=s_2.$
\end{proposition}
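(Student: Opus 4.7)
Since Theorem~\ref{thm:dimension}(2) already gives $\udim_B E \le s_2$ for every $E\in\Omega$, my goal is the matching lower bound $\udim_B E \ge s_2$ almost surely. The plan is to produce, along a subsequence $\{n_j\}$ realising the limsup in the definition of $s_2$, a dimensional constant $C_d>0$ and a bound of the form $\mathcal{N}(E,C_dr^{*}_{n_j+1})\gtrsim P_{n_j+1}$. Since $-\log r^{*}_{n+1}=-\log r_n+\frac{1}{d}\log N^{*}_{n+1}$ agrees with the denominator in \eqref{s_1} up to $O(1)$ (because $N^{*}_{n+1}\ge 2^{-d}N_{n+1}$), taking logarithms and then $\varepsilon\downarrow 0$ yields the claim.

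The key mechanism is Lemma~\ref{cor:intersection} together with the independence of the construction across distinct cubes at each level. Conditional on $E_n$, the variables $\{X_Q\}_{Q\in E_n}$ defined in \eqref{eq:randomdistributied} are therefore independent, each bounded by $N^{*}_{n+1}$ with conditional mean at least $N^{*}_{n+1}/2$. Writing $\mathcal{F}_n=\sigma(E_1,\dots,E_n)$, Lemma~\ref{lem:law of large numbers} applied to the $P_n$ summands yields
\[
\PP\Big(\sum_{Q\in E_n}X_Q<\tfrac{N^{*}_{n+1}P_n}{8}\,\Big|\,\mathcal{F}_n\Big)\le e^{-P_n/8}.
\]
Using $N^{*}_{n+1}P_n\ge 2^{-d}P_{n+1}$ and the trivial lower bound $P_n\ge 1$, the complementary event $A_n$ satisfies $\PP(A_n\mid\mathcal{F}_n)\ge 1-e^{-1/8}$ uniformly in $n$. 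A standard geometric bookkeeping then converts $\sum_Q X_Q\gtrsim P_{n+1}$ into $\mathcal{N}(E,C_dr^{*}_{n+1})\gtrsim P_{n+1}$: each contributing cube $\widetilde{Q}$ meets some $Q'\in E_{n+1}(Q)$, and by nestedness of compact sets $E\cap Q'\neq\emptyset$, so each such $\widetilde{Q}$ yields a limit point of $E$ within distance $\sqrt{d}\,r^{*}_{n+1}$ of $\widetilde{Q}$; distributing these points over a slightly coarser grid of side $C_dr^{*}_{n+1}$ with bounded multiplicity gives the lower bound on the covering number.

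Finally, since $A_n\in\mathcal{F}_{n+1}$ and the conditional probabilities $\PP(A_n\mid\mathcal{F}_n)$ are bounded below by a positive constant, L\'evy's conditional Borel--Cantelli lemma yields $\{A_{n_j}\text{ i.o.}\}$ almost surely for the chosen subsequence, and combining with the covering estimate delivers $\udim_B E\ge s_2$ almost surely. The main technical obstacle is the last geometric step: the grids $\C(Q,N^{*}_{n+1})$ across different $Q\in E_n$ are not aligned with a common ambient grid, and a cube $\widetilde{Q}$ merely intersected by some $Q'\in E_{n+1}(Q)$ need not itself contain a limit point of $E$, so one must enlarge by a dimensional factor and absorb the resulting bounded overlap into the constants $C_d$ and the implicit constant in the $\gtrsim$ symbol.
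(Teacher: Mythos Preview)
Your argument is correct and follows the paper's strategy closely: the same conditional independence, the same random variables $X_Q$, the same use of Lemma~\ref{cor:intersection} and Lemma~\ref{lem:law of large numbers}, and the same passage from $\sum_Q X_Q$ to a lower bound on $\mathcal{N}(E,\,\cdot\,)$ via a dimensional constant (the paper simply asserts this geometric step as ``elementary geometry''; your discussion of the obstacle is accurate but the overlap is indeed bounded by a constant depending only on $d$).

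The one genuine difference is the Borel--Cantelli step. The paper first disposes of the case where $\{N_k\}$ is bounded (then $s^{*}=s_2$ and Theorem~\ref{thm:dimension}(2) already gives the conclusion deterministically), and in the unbounded case takes the subsequence so that $N_{n_k+1}\nearrow\infty$, whence eventually $P_{n_k}\ge n_k$ and $\sum_k e^{-P_{n_k}/8}<\infty$; the \emph{first} Borel--Cantelli lemma then gives that the good event holds for all large $k$. You instead keep only the trivial bound $P_n\ge 1$, obtain a uniform lower bound $\PP(A_n\mid\mathcal{F}_n)\ge 1-e^{-1/8}$, and invoke L\'evy's conditional (second) Borel--Cantelli lemma along the subsequence (via the tower property, since $\mathcal{F}_{n_{j-1}+1}\subset\mathcal{F}_{n_j}$). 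This buys you a cleaner argument with no bounded/unbounded case split, at the modest cost of using a slightly less elementary tool; the paper's route yields the marginally stronger conclusion that the good event holds \emph{eventually} rather than merely infinitely often, but for a $\limsup$ this is irrelevant.
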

\begin{proof}
If $\{N_k\}$ is bounded then Theorem \ref{thm:dimension} (2) implies that $\udim_B E= s_2$ for all $E\in \Omega$. Furthermore \ref{thm:dimension} (2) clams that $\udim_B E \leq s_2$ for any $E\in \Omega$. Thus it is sufficient to prove that almost surely $\udim_B E\geq s_2$ for the case that $\{N_k\}$ is unbounded.

Suppose $s_2>0$. By the definition of $s_2$ (see \eqref{s_1}), for any $0<\varepsilon <s_2 $ there exist a sequence  $\{n_k\}_{k\in \NN}\subset \NN, n_1\leq n_2\leq \cdots$ such that 
\begin{equation}\label{eq:mmmm}
P_{n_{k}+1}\geq  (r_{n_k}/(N_{n_k+1})^{\frac{1}{d}})^{-s_2+\varepsilon}.
\end{equation}
Observe that if $\{N_{n_k+1}\}_{k\in \NN}$ is bounded, then $\udim_B E \geq s_2$ for any $E \in \Omega$. Thus  
we suppose that  $N_{n_k+1} \nearrow \infty$ as $n_k \rightarrow\infty$, and $N_{n_1} \geq 2^d$. It follows that for all $k \in \NN$,
\begin{equation}
N^*_{n_k+1}\geq 2^{-d}N_{n_k+1}.
\end{equation}

For each  $k \in \NN, Q \in E_{n_k}$, by Corollary \ref{cor:intersection} we have 
\[
\EE (X_Q) \geq N^*_{n_k+1}/2.
\]
Furthermore, conditional on $E_{n_k}$, we have that $X_Q$ and $X_{Q'}$ are independent for any two distinct cubes $Q, Q' \in E_{n_k}$.  Thus applying Lemma \ref{lem:law of large numbers}, we obtain that
\begin{equation}\label{eq:applylawof}
\PP \left( \sum_{Q \in E_{n_k}} X_Q <  N^*_{n_k+1} P_{n_k}/8 \,\Big|\, E_{n_k} \right) \leq e^{-P_{n_k}/8}.
\end{equation}

Recall that 
\[
r^{*}_{k+1}=r_k/(N^*_{k+1})^\frac{1}{d}.
\] 
By  elementary geometry, there exists a positive constant $C=C(d)$ such that 
\[
\mathcal{N}(E, r^{*}_{n_k+1})\geq C\sum_{Q \in \D_{n_k}} X_Q.
\]
For each $k\in \NN$, define the event
\[
A_k = \left( \mathcal{N}(E, r^{*}_{n_k+1})< C N^*_{n_k+1}P_{n_k}/8\right).
\]
Combining this with the estimate \eqref{eq:applylawof}, we have  
\begin{equation}
\begin{aligned} 
\PP(A_k \big|E_{n_k} ) &\leq \PP (\sum_{Q \in E_{n_k}} X_Q< N^*_{n_k+1} P_{n_k}/8 |E_{n_k})\\
& \leq e^{- P_{n_k}/8}.
\end{aligned}
\end{equation}
It follows that 
\[
\PP(A_k )\leq e^{- P_{n_k}/8}.
\]
Since $N_{n_k}\nearrow \infty$ as $k \rightarrow \infty$, thus there exists $k_0\in \NN$ such that $P_{n_k}=\prod_{i=1}^{k} N_{n_i} \geq n_k$ for all  $k\geq k_0$, and hence
\[
\sum^{\infty}_{k=1} e^{- P_{n_k}/8} < \infty.
\]
Applying the Borel-Cantelli lemma, we obtain
\[
\PP(\bigcup^\infty_{m=1} \bigcap^\infty_{k = m}A_k^c)=1,
\]
where $A_k^c$ means the complement of $A_k$. Thus we obtain that for almost any $\omega \in \Omega$, there exists $k_\omega$, such that $\omega\in A_k$ for all $k\geq k_\omega.$ It follows that for every $k \geq k_\omega$, we have 
\[
\mathcal{N}(E^\omega, r^{*}_{n_k+1}) \geq  
CN^*_{n_k+1}P_{n_k}/8.
\]
Since $N^*_{n_k+1}\geq 2^{-d}N_{n_k+1}$ and $r^{*}_{n_k+1}\geq r_{n_k}/(N_{n_k+1})^{\frac{1}{d}}$, we have
\[
\frac{\log \mathcal{N}(E^\omega, r^{*}_{n_k+1})}{-\log r^{*}_{n_k+1} } \geq \frac{\log C N_{n_k+1}^{*}P_{n_k}/8}{-\log r_{n_k}/(N_{n_k+1})^{\frac{1}{d}} }  
\]
holds for all $k \geq k_\omega$, and hence by the estimate \eqref{eq:mmmm} we have $\udim_B E^\omega \geq s_2-\varepsilon$. By the arbitrary choice of $\varepsilon$ we finish the proof.
\end{proof}

Now we intend to show that almost surely $\dim_P E=s_2$.

\begin{proof}[Proof of theorem \ref{thm:dimension} (4)]
 

Recall that  $E =\bigcap_{n\in \NN} E_n$.  Let $\{x_n\}_{n\geq 1}$ be a dense subset of  $[0,1]^d$, and $B_n:=B(x_n, 1/n)$ be an open ball. For every $n\in \NN$, by the homogeneous structure of our random Cantor sets, we obtain that almost surely on $E \cap B_n \neq \emptyset,$ 
\[
\udim_B (E \cap B_n)= s_2.
\]
It follows that almost surely for any $B_n$ (here the order of `almost surely' and `for every $n\in \NN$' is different from above) with $E \cap B_n\neq \emptyset$, 
\[
\udim_B (E\cap B_n) =s_2.
\]
Observe that for any $E\in \Omega$ and any open set $U$ with $E\cap U \neq \emptyset$, there there is a ball $B_n$ for some $n\in \NN$ such that 
\[
B_n \subset U, \, B_n \cap E \neq \emptyset. 
\]
Hence for almost all $E\in \Omega$ and any open set $U \cap E \neq \emptyset,$  
\[
\udim_B (E \cap U) \geq s_2.
\]
Applying Lemma \ref{lem:bp}, we obtain that almost surely $\dim_P E\geq s_2.$ By the fact that  $\dim_P E\leq \udim_B E$ and the Proposition \ref{lem:box}, we complete the proof.
\end{proof}


\subsection{Almost sure Assouad dimension}

\begin{proof}[Proof of theorem \ref{thm:dimension} (5)]  Assume first that $\{N_k\}$ is bounded. Let $N=\sup_{k\geq 1} N_k$. By the definition of $s_3$ we have that for any $\varepsilon>0$ there exists $k_0$ such that for any $k\geq k_0$,
\begin{equation}\label{eq:sup}
\sup_{n}\frac{\log P(n,n+k)}{-\log r(n,n+k)}< s_3+\varepsilon.
\end{equation}

Let $E\in \Omega$, for any $0<r < R \leq \sqrt{d}$, there exist $n,k$, such that 
\begin{equation}\label{eq:Rr}
r_{n+1}<R\leq r_n, \,\, r_{n+k+1}<r\leq r_{n+k}.
\end{equation}

Case 1.  $k< k_0$. For any $x\in E$ we have 
\[
\mathcal{N}(B(x,R)\cap E, r) \leq 3^dN^{k_0+1}.
\]

Case 2. $k \geq k_0$. For any $x\in E$, by estimates \eqref{eq:sup} and \eqref{eq:Rr} we obtain
\begin{equation}
\begin{aligned}
\mathcal{N}(B(x,R)\cap E, r) &\leq 3^d \prod^{n+k+1}_{i=n+1}N_i \\
&\leq 3^d N_{n+1}N_{n+k+1} \left(\frac{r_{n+1}}{r_{n+k}} \right)^{s_3+\varepsilon}\\
& \leq 3^d N^2\left( \frac{R}{r} \right)^{s_3+\varepsilon}.
\end{aligned}
\end{equation}
Thus we have $\dim_A E \leq s_3+\varepsilon$. By the arbitrary choice of $\varepsilon$ we obtain $\dim_A E \leq s_3$.

For the lower bound. For any $\varepsilon >0$, there exists $k_i \nearrow \infty$ as $i \rightarrow \infty$, such that for every $i$
\begin{equation}
\sup_{n}\frac{\log P(n,n+k_i)}{-\log r(n,n+k_i)}> s_3-\varepsilon,
\end{equation}
and so there exists $n_i$ such that 
\begin{equation}\label{eq:lower}
\frac{\log P(n_i,n_i+k_i)}{-\log r(n_i,n_i+k_i)}> s_3-\varepsilon.
\end{equation}
It follows that
\begin{equation}
\begin{aligned}
\mathcal{N}(B(x,  r_{n_i})  \cap E, r_{n_i+k_i}) &\geq C P(n_i,n_i+k_i) \\
&\geq C \left(\frac{r_{n_i}}{r_{n_i+k_i}} \right)^{s_3-\varepsilon}
\end{aligned}
\end{equation}
where $C=C(d)$ is a positive constant. Applying Lemma \ref{lem:wen} and the estimate
\[
\frac{r_{n_i}}{r_{n_i+k_i}}\geq 2^{k_i}\rightarrow \infty \text{ as }i \rightarrow \infty,
\]
we obtain that $\dim_A E \geq s_3-\varepsilon$. By the arbitrary choice of $\varepsilon$ we have $\dim_A E \geq s_3$. Thus we complete the proof in the case 
when $\{N_k\}$ is bounded. 

Now suppose $\{N_k\}$ is unbounded. Since $\dim_A E \leq d$ holds for any $ E \subset [0,1]^d$, it is sufficient to show that almost surely $\dim_A E\geq d$. Let
\[
\{n_k\}_{k\in\NN} \subset \NN \text{ with } N_{n_k+1}\nearrow \infty \text{ as } k\rightarrow\infty.
\]
For every $Q \in \D_{n_k}$, define the event  
\[
 A= (X_Q >N_{n_k+1}^*/4 ).
 \]
Recall  the random variable $X_Q$  defined in \eqref{eq:randomdistributied}. Thus
\begin{equation}
\begin{aligned}
&\EE \left(X_Q \,\Big|\, Q \in  E_{n_k} \right)\\ 
&=\EE\left(X_Q \textbf{1}_{A} \,\Big|\, Q  \in E_{n_k} \right) +\EE\left(X_Q \textbf{1}_{\Omega \backslash A}\,\Big|\, Q \in E_{n_k} \right) \\
& \leq N_{n_k+1}^* \PP\left(A \,\Big|\, Q \in E_k\right)+N_{n_k+1}^*/4.
\end{aligned}
\end{equation}
Combining this with Corollary \ref{cor:intersection}, we have 
\begin{equation}\label{eq:intersectionnumber}
\PP\left(A \,\Big|\, Q \in E_{n_k}\right) > 1/4.
\end{equation}
For every $k\in \NN$, define the event
\[
A_k = \left( \text{ there exists } Q \in E_{n_k} \text{ such that } X_Q > N_{n_k+1}^* /4 \right).
\]
Conditional on $E_{n_k}$, recall that the cubes form $E_{n_k+1}$ are chosen independently inside 
each cube of $E_{n_k}$. Thus the random variables $X_Q$ and $X_{Q'}$ are independent for any two distinct cubes $Q, Q'$ of $E_{n_k}$. Together with the estimate \eqref{eq:intersectionnumber} we have
\[
\PP\left(A_k\,\Big|\, E_{n_k} \right) \geq 1-\left(\frac{3}{4} \right)^{P_{n_k}}.
\]
It follows that for every $k\in \NN$,
\[
\PP(A_k)\geq 1-\left(\frac{3}{4} \right)^{P_{n_k}}
\]  
Thus for any $m\geq 1,$ we have $\PP(\cup^\infty_{k=m}A_k)=1$, and hence 
\begin{equation}\label{eq:full}
\PP\left(\bigcap^{\infty}_{m=1}
\bigcup^\infty_{k=m}A_k \right)=1.
\end{equation}
It follows that  for almost all $\omega \in \Omega$, there exists $k_j=k_j(\omega)\nearrow \infty$, such that $\omega\in A_{k_j}$ for all $j\in \NN.$ Combining this with Lemma \ref{lem:wen}, we obtain that almost surely $\dim_A E \geq d$. Thus we complete the proof. 
\end{proof}

\section{Typical dimensions}\label{section:typical}

For each cube $Q$, let $z_Q\in Q$ be the nearest point of $Q$ to zero vector. For each $n\in \NN$ let 
\[
\E_n=\{\text{all the possible } E_n\}.
\]

\begin{proof}[Proof of Theorem \ref{thm:typical} (1).]
Theorem \ref{thm:dimension} (1) claims that any element of $E\in \Omega$ has $\dim_H E\geq t^{*}$. In the following we intend to show that 
a typical set $E\in \Omega$ has $\ldim_B E \leq t^{*}$.

For each $n\in \NN$, let $\varepsilon_n=2\sqrt{d}
r_{n+1}N_{n+1}^{1/d}$. For each $E_n \in \E_n$, we choose an object  $\gamma=\gamma(E_n)\in \Omega$ with 
\[
\gamma \subset E_n, \text{ and } \gamma \subset \bigcup_{Q\in E_n} B(z_Q, \varepsilon_n ).
\]
Let $\Gamma_n$ be the collection of these $\gamma(E_n), E_n \in \E_n$. Observe that for any infinite set $A\subset \NN$, the set 
\[
\{\g: \g \in \Gamma_n, n \in A\}
\]
is a countable dense subset of $\Omega$.

By the definition of $t^{*}$ there is a subsequence  $I=\{n_k\}_{k\in \NN} \subset \NN$ with 
$n_k\nearrow \infty$ as $k \rightarrow \infty$ such that 
\begin{equation}\label{eq:subsequence}
t^{*}=\lim_{k\rightarrow \infty} \frac{\log P_{n_k}}{- \log r_{n_k+1}N_{n_k+1}^{1/d}}.
\end{equation}
Let $I_m=\{n_k \in I: n_k \geq m\}$ and
\[
\G=\bigcap^{\infty}_{m=1} \bigcup_{n_k \in I_m} \bigcup_{\g \in \Gamma_{n_k}} U_{d_H}(\g, r_{n_k+1}\sqrt{d}),
\]
where $U_{d_H}(\g, \ell)$ is an open set of $(\Omega, d_H)$ with center $\g$ and radius $\ell$. Since $\{\g: \g \in \Gamma_{n_k}, k\in \NN\}$ is a countable dense subset in $\Omega,$  the set 
\[
\bigcup_{n_k \in I_m} \bigcup_{\g \in \Gamma_{n_k}} U_{d_H}(\g, r_{n_k+1}\sqrt{d}),
\]
is a dense open set in $\Omega$. It follows that the complement of $\G$ is of first category. 

Let $E\in \G$, then there is subsequence $\{q_k\}_{k\in \NN} \subset \{n_k\}_{k\in \NN}$ with $q_k\nearrow \infty$ as $k \rightarrow \infty$ and $\g_{q_k} \in \Gamma_{q_k}$ such that 
\[
E\in \bigcap^{\infty}_{k=1} U_{d_H}(\gamma _{q_k},r_{q_k+1}\sqrt{d}).
\] 
Observe that  
\[
\mathcal{N} (E, 2\varepsilon_{q_k})\leq P_{q_k}.
\]
Combining this with the definition of  $\varepsilon_{q_k}$ and the formula \eqref{eq:subsequence}, we obtain
\[
\ldim_B E \leq \liminf_{k\rightarrow \infty} \frac{\log P_{q_k}}{- \log 2\varepsilon_{q_k}}=t^{*}.
\]
Thus we complete the proof. 
\end{proof}

\begin{remark}\label{re:lowlocal}
From the construction of $\gamma _{q_k}$, it follows that for any set $E\in U_{d_H}(\gamma _{q_k},r_{q_k+1}\sqrt{d})$, there exists a constant $C>0$, such that for any $x\in E, k\in \NN$,
\[
\mu(B(x,2\varepsilon_{n_k}))\geq C P_{n_k}^{-1},
\]
and hence $\ldim(\mu, x)\leq t^{*}$.
\end{remark}

\begin{proof}[Proof of Theorem \ref{thm:typical} (2)]
Theorem \ref{thm:dimension} (2) claims that any element $E\in \Omega$ has $\udim_BE \leq s_2$. In the following we intend to show that 
a typical set $E\in \Omega$ has $\dim_P E \geq s_2$.

For each $n\in \NN$, recall that  $r_{n+1}^{*}=
r_{n}/(N_{n+1}^{*})^{1/d}$. For each $E_n \in \E_n$ we intend to choose a set $\gamma=\gamma(E_n)$ depending on the relative size of $r_{n+1}^{*}$ and $r_{n+1}$. 

Case 1. $r_{n+1}^{*}<100\sqrt{d}r_{n+1}$. In this case for each $E_n$ we choose a set $\gamma$ with 
\[
\gamma \in \Omega, \, \gamma \subset E_n, \, \gamma \cap Q \neq \emptyset \text{ for any } Q \in E_n .
\]

Case 2. $r_{n+1}^{*}\geq 100\sqrt{d}r_{n+1}$. For each $E_n$ we choose a set $\gamma\subset E_n$ with 
\[
\gamma \in \Omega, \,\, \gamma\subset \bigcup_{Q\in \C(Q, N^*_{k+1})}B(z_Q, 5\sqrt{d}r_{n+1}).
\]
The notation $\C(Q, N^*_{k+1})$ is given at the beginning of Subsection \ref{ll}. In this case, we may think $\gamma$ as those $E_{n+1}$ which the cubes of $E_{n+1}$ is well separated.

Let $\Gamma_n$ be the collection of these $\gamma(E_n).$ Observe that for any infinite set $A\subset \NN$, the set 
\[
\{\g: \g \in \Gamma_n, n \in A\}
\]
is a countable dense subset of $\Omega$. 

By the definition of $s_2$ there is a subsequence  $I=\{n_k\}_{k\in \NN} \subset \NN$ with 
$n_k\nearrow \infty$ as $k \rightarrow \infty$ such that 
\begin{equation}\label{eq:subsequence2}
s_2=\lim_{k\rightarrow \infty} \frac{\log P_{n_k+1}}{- \log r_{n_k}+ \frac{1}{d} \log N_{n_k+1}}.
\end{equation}
Let $I_m=\{n_k \in I: n_k \geq m\}$ and
\begin{equation}\label{eq:typicalassouad}
\G=\bigcap^{\infty}_{m=1} \bigcup_{n_k \in I_m} \bigcup_{\g \in \Gamma_{n_k}} U_{d_H}(\g, r_{n_k+1}\sqrt{d}).
\end{equation}
Applying the same argument as in the proof of  Theorem \ref{thm:typical} (1), we obtain that the complement of $\G$ is of first category. 

Let $E\in \G$, then there is subsequence $\{q_k\}_{k\in \NN} \subset \{n_k\}_{k\in \NN}$ with $q_k\nearrow \infty$ as $k \rightarrow \infty$ and $\g_{q_k} \in \Gamma_{q_k}$ such that 
\[
E\in \bigcap^{\infty}_{k=1} U_{d_H}(\gamma _{q_k}, r_{q_k+1}\sqrt{d}).
\] 
Let $\mu$ be the natural measure on $E$. We are going to present that there exists a positive constant $C=C(d)$ such that for any $x\in E \in U_{d_H}(\gamma _{q_k}, r_{q_k+1}\sqrt{d})$, 
\begin{equation}\label{eq:to}
\mu(B(x, r_{q_k+1}^{*}/10))\leq C P_{q_k+1}^{-1}.
\end{equation}

For the above Case 1, we have 
\[
\mu(B(x, r_{q_k+1}^{*}/10))\leq \mu(B(x, 10 \sqrt{d}r_{q_k+1})) \leq C(d)P_{q_k+1}^{-1}.
\]
Now we turn to the Case 2. Since for any $x\in E \in U_{d_H}(\gamma _{q_k}, r_{q_k+1}\sqrt{d})$ there exists at most one cube of $E_{q_k+1}$ intersects $B(x, r_{q_k+1}^{*}/10)$, we have
\[
\mu(B(x, r_{q_k+1}^{*}/10))  \leq P_{q_k+1}^{-1}.
\]
Thus we obtain the estimate \eqref{eq:to}. Together with  the formula \eqref{eq:subsequence2}, we have
\[
\udim (\mu,x) \geq  \limsup_{k\rightarrow \infty} \frac{\log C^{-1}P_{q_k+1}}{- \log r_{q_k+1}^{*}/10}\geq s_2.
\]
Since this holds for any $E\in \G$ and $x\in E,$ by Lemma \ref{lem:lp} we obtain that any $E\in \G$ has $\dim_P E \geq s_2$. Thus we complete the proof.
\end{proof}

Note that the above proof also implies that a typical $E\in \Omega$ has full Assouad dimension. We show an outline for the proof.

\begin{proof}[Proof of Theorem \ref{thm:typical} (3)]

Assume $\{n_k\}\subset \NN$ with $N_{n_k+1}\nearrow \infty$ as $k \rightarrow \infty$. Let $G$ be the set in \eqref{eq:typicalassouad}. Then the structure of $\gamma \in \Gamma_{n_k}$ and Lemma \ref{lem:wen} imply that any element of $G$ has full Assouad dimension. Thus we complete the proof.
\end{proof}

\section{Local dimensions of natural measures}\label{section:localdimension}

\begin{proof}[Proof of Theorem \ref{thm:localdimension} (1)] 
For every $x \in E$ and $k\in \NN$, we have 
\[
\mu(B(x, \sqrt{d}r_k))\geq P_k^{-1},
\] 
and hence
\[
\ldim(\mu, x)\leq \liminf_{k \rightarrow \infty} \frac{\log \mu(B(x,\sqrt{d}r_k))}{\log \sqrt{d}r_k} \leq s_1.
\]

On the other hand, it follows immediately from the proof of Theorem \ref{thm:dimension} (1) that 
\[
\ldim(\mu,x) \geq t^{*} \text{ for all }  x\in E, \,\, E\in \Omega.
\]
Thus we complete the proof.
\end{proof}

\begin{proof}[Proof of Theorem \ref{thm:localdimension} (2)] For any $x\in E, 0<r<1$, there exists $k$ such that $\sqrt{d}r_{k+1}< r\leq \sqrt{d}r_k$. Observe that
\[
\mu(B(x,r)) \geq P_{k+1}^{-1}, 
\]
and
\[
\frac{\log \mu(B(x,r))}{\log r} \leq \frac{\log P_{k+1}}{-\log \sqrt{d}r_k}.
\]
Therefore 
\[
\udim(\mu, x) \leq \limsup_{k \rightarrow \infty}\frac{\log P_{k+1}}{-\log r_k}=s^{**}.
\]

On the other hand, for any $k\in \NN$,
\[
\mu(B(x, r_k)) \leq 3^{d}P_k^{-1},
\] 
and hence
\[
\udim(\mu, x)\geq \limsup_{k \rightarrow \infty} \frac{\log P_k}{-\log r_k}=s^{*}.
\]
Thus we complete the proof.
\end{proof}

\subsection{Almost sure lower local dimension}
We start from the following Lemma.

\begin{lemma}\label{lem:massdistributionr}
For any $0< s < s_1$, there exists a positive constant $C$ such that  for any fixed $x \in [0,1]^{d}$,
\begin{equation}\label{eq:expectmeasure}
 \mathbb{E} \left( \mu_{n}\left(B(x,r) \right) \big\? x \in E_{n} \right) \leq C r ^{s},  0<r<1, n\in \NN. 
\end{equation}
Furthermore we have 
\begin{equation}\label{eq:expectmeasurefuthermore}
\EE \left(\int \mu \left(B(x,r) \right)d\mu(x) \right)\leq Cr^s.
\end{equation}
\end{lemma}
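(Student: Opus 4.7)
The plan is to obtain both bounds by combining the two-point correlation estimate of Lemma \ref{lem:twopoint} with conditional probability, Fubini's theorem, and a polar-coordinates integral. Given $0 < s < s_1$, fix $\varepsilon > 0$ small enough that $s_1 - \varepsilon > s$. Then $\int_{B(x,r)} d(x,y)^{s_1-d-\varepsilon}\,dy$ evaluates by polar coordinates about $x$ to a constant times $r^{s_1-\varepsilon}$, which for $r < 1$ is bounded by $C r^s$. This geometric estimate is the workhorse for both parts.

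For \eqref{eq:expectmeasure}, fix $x \in [0,1]^d$. First note that $\PP(x \in E_n)$ equals $p_n$ when $x$ lies off the dyadic skeleton $\widetilde B$, and is at most $2^d p_n$ otherwise; this multiplicity issue is handled by absorbing a constant into $C$, exactly as at the end of the proof of Lemma \ref{lem:twopoint}. Expanding $d\mu_n(y) = p_n^{-1}\mathbf{1}_{E_n}(y)\,dy$ and using Fubini,
\begin{equation*}
\EE\bigl(\mu_n(B(x,r)) \,\big|\, x \in E_n\bigr) \;=\; \int_{B(x,r)} \frac{\PP(x \in E_n,\, y \in E_n)}{p_n\,\PP(x \in E_n)}\,dy \;\leq\; C_1 \int_{B(x,r)} \frac{\PP(x \in E_n,\, y \in E_n)}{p_n^{2}}\,dy.
\end{equation*}
Applying Lemma \ref{lem:twopoint} to the integrand and the polar estimate above yields the required bound by $C r^s$, uniformly in $n$.

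For \eqref{eq:expectmeasurefuthermore}, I would follow the template of display \eqref{eq:ooo}. Since $(x,y)\mapsto \mathbf{1}_{\{d(x,y)<r\}}$ is the indicator of an open set in $[0,1]^d\times[0,1]^d$, it is lower semicontinuous, and the weak convergence $\mu_n\times\mu_n \to \mu\times\mu$ together with Fatou's lemma gives
\begin{equation*}
\EE\!\left(\int \mu(B(x,r))\,d\mu(x)\right) \;\leq\; \liminf_{n\to\infty}\, \EE\!\left(\iint \mathbf{1}_{\{d(x,y)<r\}}\,d\mu_n(x)\,d\mu_n(y)\right).
\end{equation*}
Expanding $d\mu_n = p_n^{-1}\mathbf{1}_{E_n}\,d\mathcal{L}^d$, exchanging expectation with the double integral by Fubini, and invoking Lemma \ref{lem:twopoint} converts the inner expression to $\iint_{\{d(x,y)<r\}} p_n^{-2}\PP(x \in E_n, y \in E_n)\,dx\,dy$, which is bounded by $C\iint_{\{d(x,y)<r\}} d(x,y)^{s_1-d-\varepsilon}\,dx\,dy$; integrating first in $y$ by the polar estimate and then trivially in $x \in [0,1]^d$ yields $C r^s$, uniformly in $n$. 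The only real care needed is to ensure that every constant that appears is independent of $n$ (true for the two-point bound and for the polar integral, both of which depend only on $d$, $\varepsilon$, and $s_1$), so the main obstacle is purely book-keeping rather than conceptual.
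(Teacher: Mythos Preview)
Your argument is correct, and for \eqref{eq:expectmeasurefuthermore} it matches the paper's proof almost verbatim (weak convergence of $\mu_n\times\mu_n$ on the open set $\{d(x,y)<r\}$, Fatou, Fubini). One small slip: to pass from $\frac{1}{p_n\,\PP(x\in E_n)}$ to $\frac{C_1}{p_n^2}$ you need a \emph{lower} bound $\PP(x\in E_n)\geq p_n$, not the upper bound $\PP(x\in E_n)\leq 2^d p_n$ that you state. Fortunately the lower bound holds for every $x$ (any fixed $Q\in\D_n$ containing $x$ has $\PP(Q\subset E_n)=p_n$), so the inequality is valid with $C_1=1$; just correct the justification.

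For \eqref{eq:expectmeasure} your route genuinely differs from the paper's. The paper never invokes Lemma~\ref{lem:twopoint} here; instead it argues directly by a case split on the scale of $r$ relative to $r_n$ and to a threshold $r_N$ (where $N$ is chosen so that $P_k\geq r_k^{-s}$ for $k\geq N$), bounding $\mu_n(B(x,r))$ deterministically when $r\leq \sqrt{d}\,r_n$ and counting the expected number of level-$(k{+}1)$ cubes of $E_{k+1}$ meeting $B(x,r)$ when $r$ is larger. Your approach is shorter and conceptually cleaner: it recycles the correlation bound already proved and reduces everything to a single polar-coordinates integral $\int_{B(x,r)} d(x,y)^{s_1-d-\varepsilon}\,dy\lesssim r^{s_1-\varepsilon}\leq r^s$. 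The paper's direct estimate, on the other hand, is self-contained and shows explicitly how the bound depends on the cube structure of $E_n$; in particular it makes transparent the later adaptation to the subsequence $\{\ell_{k_j}\}$ in Lemma~\ref{lem:massdistributiondelta}, which would be slightly less immediate from your formulation.
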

\begin{proof}
For $0<s<s_1$, by the definition of $s_1$, there exists $N$ such that for all $n \geq N$, $P_n \geq r_n^{-s}$. For $0<r<1$, there exists $k$ such that $r_{k+1}\leq r <r_k$. 

Case 1.  $n <N$. In this case we have 
\begin{equation*}
\begin{aligned}
\EE&(\mu_n(B(x,r) ) \? x \in E_n)\\ 
&\leq  \EE(\mu_n(B(x,r)))\PP(x\in E_n)^{-1}\\
&\leq 2^dr^d p_n^{-1} \leq 2^dp_N^{-1}r^{s},
\end{aligned}
\end{equation*}
the last inequality holds by $p_n>p_N $ and $0<r<1$.

Case 2.  $n \geq N$.  There will appear three subcases depending on the size of $r$.

Subcase 1. $r>r_N\sqrt{d}$. In this case, we have
\begin{equation*}
\begin{aligned}
\EE&(\mu_n(B(x,r) ) \big\? x \in E_n)\leq 1\\
&= r^{-s}r^s\leq (r_N\sqrt{d})^{-s} r^s.
\end{aligned}
\end{equation*}

Subcase 2. $r \leq r_n\sqrt{d}$. In this case we have 
\begin{equation}\label{eq:one}
\begin{aligned}
\mu_{n}(B(x,r))&=\int \text{\bf 1}_{E_n\cap B(x,r)}(y)p_n^{-1}dy \\
&\leq 2^{d}r^{d}p_n^{-1} \leq 2^{d} r^{d}r_{n}^{s-d}\\
&\leq 2^d(\sqrt{d})^{d-s}r ^{s}.
\end{aligned}
\end{equation}
Since this holds for any $n\in \NN$, we have
\[
 \mathbb{E} \left( \mu_{n}\left(B(x,r) \right) \big\? x \in E_{n} \right) \leq 2^d(\sqrt{d})^{d-s}r ^{s}.
\]

Subcase 3. $\sqrt{d}r_n <r\leq r_N \sqrt{d}$. Let $\I=\I(B(x,r),k+1)$ be the collection of cubes of $\D_{k+1}$ which intersects $B(x,r)$. By a volume argument there exists a positive  constant $C_1$ such that 
\[
\# \I \leq C_ 1 \left(\frac{r}{r_{k+1}} \right)^d.
\]
Note that for $Q\in \D_{k+1}$ and $x \notin Q$ we have 
\[
\PP(Q\subset E_{k+1}, x \in E_n) \leq \frac{N_{k+1}}{M_{k+1}^d}p_n,
\]
and hence 
\[
\PP(Q \subset E_{k+1}| x \in E_n) \leq \frac{N_{k+1}}{M_{k+1}^d}.
\]

Combining these with $P_k\geq r_k^{-s}$, we have
\begin{equation}\label{eq:two}
\begin{aligned}
&\mathbb{E} ( \mu_{n}(B(x,r)) \big\? x \in E_{n})
\leq \sum _{Q \in \I  } \mathbb{E} ( \mu_{n}(Q) \big\? x \in E_{n})\\
&=  \sum _{\substack {Q \in \I \\ x \notin Q } } \mathbb{E} (  \mu_{n}(Q)  \big\? x \in E_{n}) +  \sum _{\substack {Q \in \I \\ x\in Q  } } \mathbb{E} (  \mu_{n}(Q) \big\? x \in E_{n})\\
&\leq \# \I \,\frac{N_{k+1}}{M_{k+1}^d}P_{k+1}^{-1}+2^dP_{k+1}^{-1}\\
&
\leq C_1(\sqrt{d})^{d-s}r^{s}+2^{d}(\sqrt{d})^{-s}r^{s}\\
&\leq Cr^{s}.
\end{aligned}
\end{equation}

We fix a large constant $C$ such that all the above estimates hold. Thus we obtain the estimate \eqref{eq:expectmeasure}. 

Note that for any open set $O\subset [0,1]^{d}\times [0,1]^{d}$, we have (see \cite[Chapter 1]{Mattila1995})
\[
\mu\times \mu(O)\leq \liminf_{n\rightarrow \infty} \mu_{n}\times \mu_{n}(O).
\]

It follows that (let $B(x,r)$ be an open ball)
\begin{equation}
\begin{aligned}
&\int \mu(B(x,r) )d\mu(x) \\
&=\int \int \text{ \bf 1}_{\{(x,y): \?x-y\? < r\}} d\mu(x)d\mu(y) \\
&\leq \liminf_{n\rightarrow \infty} \int \int \text{ \bf 1}_{\{(x,y): \?x-y\? < r\}} d\mu_n(x)d\mu_n(y)\\
&= \liminf_{n\rightarrow \infty} \int \mu_n(B(x,r) )d\mu_n(x). \\
\end{aligned}
\end{equation}
 
Applying Fatou's lemma and \eqref{eq:expectmeasure}, we have
\begin{equation}
\begin{aligned}
&\EE  \left(\int \mu(B(x,r) )d\mu(x) \right)\\
&\leq \liminf_{n\rightarrow \infty} \EE \left(\int \mu_n(B(x,r) )d\mu_n(x) \right)\\
&= \liminf_{n\rightarrow \infty} \int_{[0,1]^d} p_n^{-1}\EE \left(\mu_n \left(B \left(x,r \right) \right) \textbf{1}_{E_n}(x) \right)dx \\
&= \liminf_{n\rightarrow \infty} \int_{[0,1]^d} \EE \left(\mu_n \left(B \left(x,r \right) \right) \big\?  x \in E_n \right)dx\\
&\leq C r^s.
\end{aligned}
\end{equation}
Thus we finish the proof.
\end{proof}

\begin{proof}[Proof of Theorem \ref{thm:localdimension} (3)]
For the lower bound, let $\varepsilon>0, s>0$ with $s+\varepsilon<s_1$. Note that for this $s$, by Lemma \ref{lem:massdistributionr} there is constant $C$ such that
the estimate \eqref{eq:expectmeasurefuthermore} holds. Let $\ell_j= 2^{-j}$ for $j\in \NN$. Then 
\begin{equation}\label{eq:inter}
\begin{aligned}
\EE &\left(\int \sum^\infty_{j=1} \ell_j^{-s}\mu(B(x,\ell_j) )d\mu(x) \right) \\
&= \sum^\infty_{j=1} \ell_j^{-s} \EE \left(\int \mu(B(x,\ell_j) )d\mu(x) \right)\\
&\leq C \sum^\infty_{j=1} \ell_j^{-s} \ell_j^{s+\varepsilon}<\infty
\end{aligned}.
\end{equation}
Thus we obtain that a.s. 
\[
\int \sum^\infty_{j=1}\ell_j^{-s}\mu(B(x,\ell_j) )d\mu(x)<\infty,
\]
and hence for $\mu$-a.e. $x$ 
\[
 \sum^\infty_{j=1} \ell_j^{-s}\mu(B(x,\ell_j))<\infty.
\]
Combining this with our choice  $\ell_j=2^{-j}$, we obtain $\ldim(\mu,x) \geq s.$  Since this holds for any $s<s_1,$ we have a.s. $\ldim(\mu,x)\geq s_1$ for $\mu$-a.e. $x$. Thus we finish the proof.
\end{proof}

\subsection{Almost sure upper local dimension}
Let $\ell_k=r_k/N_{k+1}^{\frac{1}{d}}, k\in \NN$. Applying the similar arguments to Lemma \ref{lem:massdistributionr}, we have the following  result.

\begin{lemma}\label{lem:massdistributiondelta}
For any $0<s<s_2$, there exists $C$ and a subsequence $\{\ell_{k_j}\}_{j\geq 1} \subset \{\ell_k\}_{k\geq 1}$, such that 
\begin{equation}
\EE \left(\mu_n(B(x, \ell_{k_j})) \big\? x \in E_n \right) \leq C \ell_{k_j}^{s}, \,\, j\in \NN.
\end{equation}
Furthermore we have
\begin{equation}
\EE\left( \int \mu(B(x, \ell_{k_j})) d \mu(x) \right)\leq C \ell_{k_j}^{s}, \,\, j\in \NN.
\end{equation}
\end{lemma}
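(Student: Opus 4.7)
The plan is to adapt the proof of Lemma \ref{lem:massdistributionr}, replacing the $\liminf$-scale $r_n$ by the $\limsup$-scale $\ell_{k_j}$. From $s<s_2$ and the definition \eqref{s_1}, I choose a subsequence $\{k_j\}$ along which $P_{k_j+1} \geq \ell_{k_j}^{-s}$. Rewriting this using $\ell_{k_j} = r_{k_j} N_{k_j+1}^{-1/d}$, $r_{k_j+1}=r_{k_j}/M_{k_j+1}$, and $p_m = P_m r_m^d$ yields the two equivalent forms
\[
p_{k_j+1} \geq r_{k_j+1}^d\, \ell_{k_j}^{-s}, \qquad p_{k_j} \geq \ell_{k_j}^{d-s},
\]
which will drive the two cases below (the second follows from the first via $p_{k_j}/p_{k_j+1} = M_{k_j+1}^d/N_{k_j+1} = \ell_{k_j}^d/r_{k_j+1}^d$).

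For $n \leq k_j$, the ball $B(x,\ell_{k_j})$ has radius at most $r_n$, so the elementary volume bound $\mu_n(B(x,\ell_{k_j})) \leq 2^d \ell_{k_j}^d/p_n$ combined with $p_n \geq p_{k_j} \geq \ell_{k_j}^{d-s}$ gives $\mu_n(B(x,\ell_{k_j})) \leq 2^d \ell_{k_j}^s$ deterministically. For $n \geq k_j+1$, I cover $B(x,\ell_{k_j})$ by at most $C_d M_{k_j+1}^d/N_{k_j+1}$ cubes of $\D_{k_j+1}$ and treat the cube $Q_x$ containing $x$ separately from the others. For $Q \neq Q_x$ in the cover, the two-point correlation bound
\[
\PP(Q \subset E_{k_j+1},\, Q_x \subset E_{k_j+1}) \leq p_{k_j+1}^2/p_{k_j},
\]
valid in both the same-parent and different-parent subcases via $N(N-1)/(M(M-1)) \leq (N/M)^2$, combined with independence of refinements below level $k_j+1$, yields $\PP(y\in E_n \mid x\in E_n) \leq p_n/p_{k_j}$ for $y \in Q$. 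Summing contributes at most $C M_{k_j+1}^d/N_{k_j+1}\cdot r_{k_j+1}^d/p_{k_j} = C/P_{k_j+1} \leq C \ell_{k_j}^s$. For $Q_x$, the expected-volume identity $\int_{Q_x}\PP(y\in E_n \mid x\in E_n)\,dy = r_{k_j+1}^d \cdot p_n/p_{k_j+1}$ (conditional on $Q_x \in E_{k_j+1}$, the further refinement inside $Q_x$ has mean density $p_n/p_{k_j+1}$) yields $\EE(\mu_n(Q_x) \mid x\in E_n) = r_{k_j+1}^d/p_{k_j+1} \leq \ell_{k_j}^s$ by our choice of subsequence.

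Adding the two contributions proves the first inequality, uniformly in $n$. The second, averaged inequality then follows verbatim from the Fatou--Fubini argument at the end of the proof of Lemma \ref{lem:massdistributionr}, since $\mu_n \to \mu$ weakly and the integrand is (up to passing to a lower-semicontinuous modification on open balls) admissible. The main obstacle is the treatment of the cube $Q_x$ in the second case: a trivial bound on $\mu_n(Q_x)$ is too crude, and one must use the expected-volume identity, whose strength depends on the precise bound $p_{k_j+1} \geq r_{k_j+1}^d \ell_{k_j}^{-s}$ supplied by the choice of subsequence.
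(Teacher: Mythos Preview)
Your proof is correct and follows essentially the same approach as the paper's proof sketch, which simply refers back to Lemma~\ref{lem:massdistributionr} and notes that only Subcases~2 and~3 there need to be adapted, replacing the estimate $P_k\geq r_k^{-s}$ by $P_{k_j+1}\geq \ell_{k_j}^{-s}$ along the chosen subsequence. Your case split $n\leq k_j$ versus $n\geq k_j+1$ matches those two subcases, and your treatment of the cube $Q_x$ via the expected-volume identity recovers precisely the paper's deterministic bound $\mu_n(Q_x)=P_{k_j+1}^{-1}=r_{k_j+1}^d/p_{k_j+1}$ used in the ``$x\in Q$'' term of \eqref{eq:two}; the remark that a trivial bound is too crude is therefore slightly overstated, since this deterministic equality is all that is needed.
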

\begin{proof}[Proof sketch]
For any $s<s_2$, there exists a subsequence $\{\ell_{k_j}\}_{j\geq 1} \subset \{\ell_k\}_{k\geq 1}$
such that $P_{k_{j}} \geq \ell_{k_j}^{-s}$ for all $j\in \NN$.

For each $j\in \NN$, let $\ell_{k_j}$ be the $r$ in the proof of Lemma \ref{lem:massdistributionr}. By the choice of $\{\ell_{k_j}\}_{j\geq 1}$, it is sufficient to consider Subcase 2 and Subcase 3 in the proof of Lemma \ref{lem:massdistributionr}. Moreover we use the estimate $P_{k_j}\geq \ell_{k_j}^{-s}$ at the estimates \eqref{eq:one} and \eqref{eq:two}. Thus we complete the proof.
\end{proof}

\begin{proof}[Proof of Theorem \ref{thm:localdimension} (4)] Lemma \ref{lem:lp} and Theorem \ref{thm:dimension} (2) imply that  for any $E \in \Omega$,
\[
\udim(\mu,x)\leq s_2
\]
holds for $\mu$-almost every $x\in E$. 

For the lower bound. Suppose $s_2>0$. Let $\varepsilon>0, s>0$ with $s+\varepsilon<s_2$. Applying Lemma \ref{lem:massdistributiondelta} and 
the same argument as in the estimate \eqref{eq:inter}, we obtain 
\begin{equation}
\begin{aligned}
\EE \Big(\int &\sum^\infty_{j=1}\ell_{k_j}^{-s}\mu(B(x,\ell_{k_j}) )d\mu(x) \Big) \\
&\leq C \sum^\infty_{j=1} \ell_{k_j}^{\varepsilon} \leq C \sum^\infty_{j=1} 2^{-k_{j}\varepsilon}\\
&< \infty
\end{aligned}.
\end{equation}
By the same argument as in the proof of Theorem \ref{thm:localdimension} (3), we complete the proof.
\end{proof}

\section{Hitting probabilities}\label{section:hitting}



In this section, we study the hitting probabilities of random Cantor sets in $\Omega(M,N)$. Note that the Hausdorff dimension of any $E\in \Omega$ is $\log N/ \log M=:s$. The methods which we use in the following proof are mainly from \cite[Chapter 8]{Falconer1997}, \cite{Peres} (first-Moment and second-Moment methods) and \cite{Shmerkin2}. 

Before we give the proof, we first show the following heuristic calculation. For $F\subset [0,1]^{d},$ define 
\[
F_n=\{Q\in \D_n: Q \cap F\neq \emptyset\}.
\]
Suppose $\#F_n$ roughly equals $ M^{n\alpha}$. We simply denote it as $\#F_n\sim  M^{n\alpha}$. Observe that 
\[
\EE(\# (F_n \cap E_n)) \sim M^{n\alpha} \left(\frac{N}{M^{d}}\right)^{n}=M^{(\alpha+s-d)n}.
\]
Therefore Theorem \ref{thm:hitting} should follows from the  relationships between $\alpha$ and $d-s$.

\begin{proof}[Proof of Theorem \ref{thm:hitting} (1)]  Recall that $\dim_HF =\alpha$ and $\alpha+s<d$. Applying the equivalent definition of Hausdorff dimension (\cite[Chapter 2.4]{Falconer2003}), we have that for any $\varepsilon>0$, there exists a sequence of interior disjoint cubes $\{Q_i\}_{i\in\NN} \subset \D$, such that $F \subset \bigcup_{i=1}^{\infty} Q_i$ and (see )
\begin{equation}\label{eq:firstmoment}
\sum_{i=1}^{\infty} \?Q_i\?^{d-s}<\varepsilon.
\end{equation}
Recall that $\?Q\?$ is the diameter of $Q$. For any $Q\in \D_n, n\in \NN$, we have
\begin{equation}
\begin{aligned}\label{eq:nnn}
\PP(Q \cap E \neq \emptyset)&\leq \PP( \text{ there exists } Q'\in E_n  \text{ with } Q' \cap Q \neq \emptyset) \\
&\leq 3^{d} (NM^{-d})^{n}=3^{d}M^{(s-d)n}\leq 3^{d} \?Q\?^{d-s}.
\end{aligned}
\end{equation}
Here we used the condition $N=M^{s}$. Observe that 
\[
(E \cap F \neq \emptyset) \subset \bigcup^{\infty}_{i=1} (E \cap Q_i \neq \emptyset).
\]
Combining this with the estimates \eqref{eq:firstmoment} and \eqref{eq:nnn}, we obtain
\begin{equation}
\begin{aligned}
\PP(E \cap F \neq& \emptyset)\leq \sum_{i=1}^{\infty}\PP(E \cap Q_i \neq \emptyset)\\
& \leq 3^d\sum_{i=1}^{\infty} |Q_i|^{d-s} < 3^d\varepsilon.
\end{aligned}
\end{equation}
We complete the proof by the arbitrary choice of  $\varepsilon$.
\end{proof}

\begin{proof}[Proof of Theorem \ref{thm:hitting} (2)] Let $\varepsilon>0$ such that $0< 2\varepsilon<\alpha+s-d$. Since $\dim_H F=\alpha$, by \cite[Theorem 4.10]{Falconer2003} there exists a compact subset $K\subset F$ such that $\dim_H K >\alpha-\varepsilon$. Furthermore,  by \cite[Theorem 4.13]{Falconer2003} there exists a probability measure $\lambda$ on $K$ such that for all $0<\beta<\alpha-\varepsilon$,  
\begin{equation}\label{eq:L^2bounded}
\E_{\beta}(\lambda):=\int\int d(x,y)^{-\beta}d\lambda(x)d\lambda(y) <\infty.
\end{equation}

For each $n \in \NN$, defining 
\[
K_n=\{Q\in \D_n^{*}: Q \cap K \neq \emptyset\}
\]
where $\D_n^{*}$ denotes the modification of $\D_n$ such that the elements of $\D_n^{*}$ form a partition of $[0,1]^{d}$. Roughly speaking, $\D_n^{*}$ denotes the collection of $M^{dn}$ ``half close half open cubes'' cubes with side length $M^{-n}$ such that any two distinct cubes are disjoint.

Let
\[
K_n=\{Q \in \D^{*}_n: Q \cap K \neq \emptyset\}
\]
(We may consider $K_n$ as a subset of $[0,1]^{d}$ for convenience of notation). For $\omega\in \Omega$, define the random set
\[
K_n^\omega= \{Q \in K_n: Q \subset E_n^\omega\}.
\]

Let $p:=N/M^{d}$, define the random measure
\begin{equation}\label{eq:martingalemeasure}
\nu_n^{\omega} = p^{-n} \lambda \big\?_{K^\omega_n}
\end{equation}
where $\lambda \big\?_{K^\omega_n}$ is the measure $\lambda$ restricted to $K^\omega_n$. Let 
\[
K^{\omega}= \bigcap^{\infty}_{n=1}K_n^{\omega}.
\]
Since $K$ is a compact set, we obtain that for any $\omega$,  
\begin{equation}
\label{eq:i}
K^{\omega}\subset K\cap E^{\omega}\subset F.
\end{equation}
In the following we intend to show that $\nu^{\omega}(K^{\omega})>0$ with positive probability, where $\nu^{\omega}$ is the weak limit measure of $\nu_n^{\omega}$.

The random sets $\{K^{\omega}_m\}_{1\leq m\leq n}$ give rise to an increasing filtration of $\sigma$-algebras $\mathcal{F}_n$.  For any $Q\in \D_n$, we have 
\[
\EE(\lambda(Q \cap K_{n+1}) \big| Q \in K_n)=p\lambda(Q)=p\lambda(Q\cap K_n)
\]
and 
\[
\EE(\lambda(Q \cap K_{n+1}) \big| Q \notin K_n)=0.
\]
Therefore $\EE(\lambda(Q \cap K_{n+1}^{\omega}) \big| \mathcal{F}_{n})= p \lambda(Q \cap K_n^{\omega}).$ In fact this estimates holds for any $Q\in \D^{*}_k, k\in \NN.$ It follows that
\begin{equation*}
\begin{aligned}
\EE(\nu^{\omega}_{n+1}(Q) \big| \mathcal{F}_n)&=p^{-n-1}\EE(\lambda (Q\cap K^{\omega}_{n+1}) \big| \mathcal{F}_n)\\
&= p^{-n}\lambda_n(Q \cap K_n^{\omega})=\nu_n^{\omega}(Q).
\end{aligned}
\end{equation*} 

Thus the sequence $\{\nu_n(Q), \mathcal{F}_n\}_{n\in \NN}$ is a martingale sequence. Applying the same argument as in \cite[Lemma 8.7]{Falconer1997}, we see that almost surely $\nu_n^{\omega}$ weakly converges to a measure $\nu^{\omega}$. Furthermore, applying Lemma \ref{lem:twopoint} and the condition \eqref{eq:L^2bounded} we obtain
\begin{equation*}
\begin{aligned}
&\EE((\nu_n([0,1]^d))^2)
=p^{-2n}\EE(\lambda(K_n)^2)\\
&=p^{-2n}\EE(\int \int \textbf{1}_{K_n \times K_n}(x,y) d\lambda(x) d\lambda(y))\\
&\leq C \int \int d(x,y)^{s-d-\varepsilon}d\lambda(x) d\lambda(y)<\infty.
\end{aligned}
\end{equation*}
It means that $\{\nu_n([0,1]^d)\}_{n\in\NN}$ is an $L^2$-bounded martingale. Thus by \cite[Corollary 8.4]{Falconer1997} we obtain that
\[
\EE(\nu([0,1]^d))=\EE(\nu_1([0,1]^d))=1,
\]
and hence $\nu^{\omega}([0,1]^d)>0$ with positive probability. Note that for any $\omega \in \Omega$, we have $\nu^\omega ([0,1] \backslash K^\omega)=0.$ It follows that 
$\nu^\omega(K^\omega)>0$ with positive probability. By the inclusion \eqref{eq:i} we complete the proof.
\end{proof}



\begin{proof}[Proof of Theorem \ref{thm:hitting} (3)] Let $\varepsilon>0$ such that $0< 2\varepsilon<\alpha+s-d$ and $t= \alpha+s-d-2\varepsilon$. We use the same notations as in the previous proof. 

Applying Fatou's lemma, Fubini theorem, and Lemma \ref{lem:twopoint}, we obtain 
\begin{equation}
\begin{aligned}
\EE  & \left(  \int  \int d(x,y)^{-t} d\nu(x) d\nu(y) \right) \\
&\leq\liminf_{n \rightarrow \infty}
\EE \left( \int \int d(x,y)^{-t}d\nu_n(x)d\nu_n(y) \right) \\
&=\liminf_{n\rightarrow \infty} \EE \left( \int \int d(x,y)^{t}p^{-2n}\textbf{1}_{K_n\times K_n}(x,y)d\lambda(x)d\lambda(y) \right) \\
&\leq C \int \int d(x,y)^{-t}d(x,y)^{s-d-\varepsilon}d\lambda(x)d\lambda(y) \\
&\leq C \int \int d(x,y)^{-\alpha+\varepsilon}d\lambda(x)d\lambda(y)< \infty.
\end{aligned}.
\end{equation}
The last inequality holds by the choice of $\lambda$, see estimate \eqref{eq:L^2bounded}. Recall that  $\nu^{\omega}(K^{\omega})>0$ with positive probability. As before this implies that 
\[
\dim_H (K^\omega)\geq \alpha-\varepsilon
\]
with positive probability. By the arbitrary choice of $\varepsilon$, we complete the proof.
\end{proof}

\begin{remark}  
Applying the similar argument to \cite[Chapter 7]{Peres}, we show a different  proof from above  for Theorem \ref{thm:hitting} (2) in the following.

\begin{proof}[Proof Sketch.]
For any $\varepsilon>0,$ there exists a compact subset $K \subset F$, such that $\dim_H K =\alpha-\varepsilon.$ We choose small $\varepsilon$
satisfies 
\begin{equation}\label{eq:2}
 \alpha+s>d+2\varepsilon.
\end{equation}
Recalling $E=\bigcap_{n=1}^{\infty}E_n$. Since $K$ is a compact set, we have 
\[
(E \cap K \neq \emptyset)=\bigcap^{\infty}_{n= 1} (E_n \cap K \neq \emptyset).
\]
Observe that the events $(E_{n} \cap K \neq \emptyset)$ is monotone decrease, hence we have
\begin{equation}\label{eq:ff}
\PP(E \cap K \neq \emptyset)=\lim_{n\rightarrow \infty}\PP (E_n \cap K \neq \emptyset).
\end{equation}

For each $n \in \NN$, defining 
\[
K_n=\{Q\in \D_n^{*}: Q \cap K \neq \emptyset\}
\]
where $\D_n^{*}$ denotes the modification of $\D_n$ such that the elements of $\D_n^{*}$ form a partition of $[0,1]^{d}$. Roughly speaking, $\D_n^{*}$ denotes the collection of $M^{dn}$ ``half close half open cubes'' cubes with side length $M^{-n}$ such that any two distinct cubes are disjoint.

Let $\lambda$ be a probability measure on $K$ such that for any  $0<\beta<\alpha-\varepsilon$,  
\begin{equation}\label{eq:en}
\E_{\beta}(\lambda):=\int\int d(x,y)^{-\beta}d\lambda(x)d\lambda(y) <\infty.
\end{equation}
Let $p=N/M^{d}$, defining
\[
Y_n=\sum_{Q \in K_n} p^{-n}\textbf{1}_{E_n}(Q) \lambda (Q), ~ n\in \NN
\]
where $\textbf{1}_{E_n}(Q)=1$ when $Q \subset E_n$, otherwise equal zero. For any $Q\in K_n$, we have $\PP(Q\subset E_n)=p^{n}.$ It follows that
\begin{equation}\label{eq:111}
\EE(Y_n)=\lambda(K_n)=1, ~ n\in \NN. 
\end{equation} 
Note that for any $n\in \NN$,
\begin{equation}\label{eq:fact}
  (Y_n >0) \subset (E_n \cap K \neq \emptyset).
\end{equation}

Observe that there exists a positive constant $C_1=C_1(d)$ such that for any $Q, Q' \in K_n, n\in \NN$, and  $x\in Q, x'\in Q'$, we have
\[
\PP(Q \subset E_n, Q' \subset E_n) \leq C_1\PP(x\in E_n, x'\in E_n).
\]
Note that the equality holds when $x$ and $x'$ are  interior point of $Q$ and $Q'$ respectively.
Applying Lemma \ref{lem:twopoint}, the conditions \eqref{eq:2} and \eqref{eq:en}, we obtain
\begin{equation}
\begin{aligned}\label{eq:YY}
\EE(Y_n^2)=&\sum_{Q \in K_n}\sum_{Q' \in K_n}p^{-2n}\lambda(Q)\lambda(Q')\PP(Q\subset  E_n, Q' \subset E_n)\\
&\leq C_1\sum_{Q \in K_n}\sum_{Q' \in K_n}p^{-2n} \int_{Q}\int_{Q'} \PP(x\in E_n, x'\in E_n) d\lambda(x)d\lambda(x')\\
&\leq C_1 C_2 \sum_{Q \in K_n}\sum_{Q' \in K_n}\int_{Q}\int_{Q'}d(x,x')^{s-d-\varepsilon}d\lambda(x)d\lambda(x')\\
&= C_1C_2 \E_{d-s+\varepsilon}(\lambda)<\infty.
\end{aligned}
\end{equation}
Here the constant $C_2$ comes from Lemma \ref{lem:twopoint}.  

By the Cauchy-Schwarz inequality, we obtain
\[
\EE(Y_n)^{2}=\EE(Y_n\textbf{1}_{(Y_n>0)})^{2}
\leq \EE(Y_n^{2})\PP(Y_n>0),
\]
and hence (Paley-Zygmund inequality)
\begin{equation}\label{eq:zgmend}
\PP(Y_n>0)\geq \dfrac{\EE(Y_n)^{2}}{\EE(Y_n^{2})}.
\end{equation}
Combining this with estimates \eqref{eq:111} and \eqref{eq:YY}, we obtain
\[
\PP(Y_n >0) \geq \frac{\EE(Y_n)^2}{\EE(Y_n^2)} \geq \frac{1}{C_1C_2 \E_{d-s+\varepsilon}(\lambda)}:=\delta >0.
\]
Applying the estimates \eqref{eq:ff} and \eqref{eq:fact}, we obtain
\begin{equation*}
\begin{aligned}
\PP(E\cap F\neq \emptyset)&\geq \PP(E \cap K \neq \emptyset)\\
&=\lim_{n\rightarrow \infty}\PP(E_n\cap K \neq \emptyset)\\
&\geq \liminf_{n\rightarrow \infty}\PP(Y_n >0)\geq \delta.
\end{aligned}
\end{equation*}
Thus  we complete the proof.
\end{proof}


\end{remark}

\section{Further results and questions}\label{section:remarks}

\subsection{Some examples for exceptional sets}\label{section:example} Here we present some examples of exceptional sets for the almost sure type results in the case $d=1$ (i.e. any element of $\Omega$ is a subset of $[0,1]$). For $\{n_k\}_{k\geq 1} \subset \NN$, we consider the space $\Omega=\Omega(3^{n_k}, 2^{n_k})$. In fact our examples will always looks like $\Omega(3^{n_k}, 2^{n_k})$, but the sequences $\{n_k\}$ are different in different examples. It is clear that for any $\{n_k\}_{k\geq 1} \subset \NN$ the classic Cantor ternary set $C \in \Omega$, and it is well known that 
\begin{equation}\label{eq:assouad2}
\dim_H C=\dim_A C= \frac{\log 2}{\log 3}.
\end{equation}
For convenience, let $s_k=\sum^k_{j=1}n_j$.

\begin{example}
Let $n_k/s_k \rightarrow 1$ as $k\rightarrow \infty$, then there exists $E \in \Omega$ such that $\ldim_B E =0$.
\end{example}
\begin{proof}
Note that for any $\{n_k\}_{k\geq 1} \subset \NN$,  Theorem \ref{thm:dimension} (3) claims that almost surely 
\[
\dim_H E= \ldim_B E= \frac{\log 2}{\log 3}.
\]
While Theorem \ref{thm:typical} (1) implies that for a typical $E\in \Omega$,  $\dim_B E =t^{*}=0$. However, we show an concrete example in the following for clearness. For $n_1$, we divide $[0,1]$ into $3^{n_1}$ interior disjoint $3^{n_1}$-adic closed intervals and choose $2^{n_1}$ closed intervals of them from the left part of $[0,1]$. They are interior disjoint and their union is $[0, 2^{n_1}3^{-n_1}]$. Let $E_1$ be the collection of these $2^{n_1}$ intervals. 
Given $E_k$, the collection of $2^{s_k}$ closed intervals with the same length $3^{-s_k}$. For every  interval $I \in E_k$, we divide it into $3^{n_{k+1}}$  interior disjoint $3^{s_{k}}$-adic closed intervals and choose $2^{n_{k+1}}$ closed intervals of them from the left part of $I$ (see Figure \ref{figure5}), and let $E_{k+1}$ be the union of the chosen closed intervals. Let $E= \bigcap_{k\geq 1} E_k$. Note that 
$n_k/s_k \rightarrow 1$ implies that $s_k/n_{k+1}\rightarrow0.$ For every $k \in \NN$, we have
\[
N(E, r_{k+1}N_{k+1})\leq P_k,
\]
and hence
\[
\frac{\log P_k}{-\log r_{k+1}N_{k+1}}=\frac{s_k \log 2}{s_k \log 3+ n_{k+1}\log (3/2)}\rightarrow 0.
\]
It follows that $\ldim_B E=0$. Thus we complete the proof.
\end{proof}

\begin{example}
Let $n_k/s_k \rightarrow 1$ as $k\rightarrow \infty$, then almost surely 
\[
\dim_P E=\udim_B E =1,
\]
and hence the Cantor set  is an exceptional set for Theorem \ref{thm:dimension} (4).
\end{example}
\begin{proof}
By a straight calculation, we have
\[
\frac{\log P_{k+1}}{-\log (r_k/N_{k+1})}=\frac{s_{k+1}\log 2}{s_k\log 3+n_{k+1}\log 2} \rightarrow 1 \text{ as } k\rightarrow \infty.
\]
The claim follows by Theorem \ref{thm:dimension} (4) and \eqref{eq:assouad2}.
\end{proof}

\begin{figure}
\medskip
\resizebox{0,8\textwidth}{!}{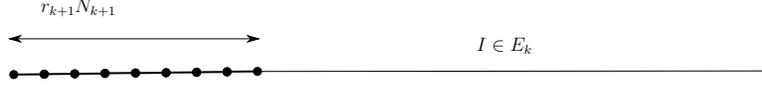}
\caption{There are $8$ subintervals of $I$ which belong to $E_{k+1}$, and all of them accumulate at the left part of $I$. We can think this as the one dimensional version of Figure \ref{figure3}.}
\label{figure5} 
\end{figure}

\begin{example}
Let $n_k \rightarrow \infty$. Then Theorem \ref{thm:dimension} (5) claims that almost surely $\dim_A E= 1$. Thus the Cantor set $C$ is an exceptional set. 
\end{example}

\subsection{Typical local dimension}

Recall that for any $E\in \Omega$, there is a natural measure $\mu$ on $E$. We can also study the typical local dimensions for these natural measures.

\begin{proposition}
(1) For a typical $E\in\Omega$, and \textbf{all} $x\in E$, we have
\[
\ldim(\mu, x)= t^{*}. 
\]

(2) For a typical $E\in\Omega$, and \textbf{all} $x\in E$, we have
\[
\udim(\mu, x)\geq s_2. 
\]
\end{proposition}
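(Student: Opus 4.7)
The plan is to reuse the residual sets $\G$ already constructed in the proofs of Theorem \ref{thm:typical} (1) and (2), and to observe that the local estimates established there are in fact pointwise estimates valid for \emph{every} $x\in E$, not just for $\mu$-a.e.\ $x$. So the work has already been done implicitly; what remains is to extract the statements in the correct form and combine them with the universal one-sided bounds from Theorem \ref{thm:localdimension} (1) and (2).

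For (1), first I would invoke Theorem \ref{thm:localdimension} (1), which gives $\ldim(\mu,x) \geq t^{*}$ for every $E\in\Omega$ and every $x\in E$. So only the upper bound needs to be arranged typically. Take the residual set $\G$ constructed in the proof of Theorem \ref{thm:typical} (1). By construction, any $E\in\G$ lies in infinitely many of the neighbourhoods $U_{d_H}(\gamma_{q_k}, r_{q_k+1}\sqrt{d})$, and Remark \ref{re:lowlocal} gives a constant $C>0$ such that for every $x\in E$ and every $k\in\NN$,
\[
\mu(B(x, 2\varepsilon_{n_k})) \geq C P_{n_k}^{-1}.
\]
Taking logarithms along the subsequence $\{q_k\}$ and using the definition of $\varepsilon_{n_k}=2\sqrt{d}\,r_{n_k+1}N_{n_k+1}^{1/d}$ together with \eqref{eq:subsequence}, one obtains $\ldim(\mu,x)\leq t^{*}$ for every $x\in E$. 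Combined with the universal lower bound this yields equality for every $x\in E$.

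For (2), the proof of Theorem \ref{thm:typical} (2) already produces this statement as an intermediate step: for any $E$ in its residual set $\G$ and \emph{every} $x\in E$, the estimate \eqref{eq:to}
\[
\mu(B(x, r^{*}_{q_k+1}/10)) \leq C P_{q_k+1}^{-1}
\]
is established along a subsequence $\{q_k\}\subset\{n_k\}$, and together with \eqref{eq:subsequence2} this directly yields $\udim(\mu,x)\geq s_2$. Hence no new argument is required: one simply notes that the inequality $\udim(\mu,x)\geq s_2$ holds for all $x\in E$, not merely $\mu$-a.e., and this is what we need.

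The only subtlety I foresee is bookkeeping rather than conceptual: one must be careful that the residual sets from the proofs of Theorem \ref{thm:typical} (1) and (2) are indeed dense $G_\delta$'s (which they are, by construction) and that the pointwise estimates above are uniform in $x\in E$ along the chosen subsequence. If one wishes to assert both statements simultaneously for a single typical $E$, it suffices to intersect the two residual sets, which remains residual. No substantial new obstacle is expected.
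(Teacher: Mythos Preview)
Your proposal is correct and follows essentially the same route as the paper: the paper's proof likewise derives claim (1) from Remark~\ref{re:lowlocal} (for the upper bound) together with the mass-distribution estimate underlying Theorem~\ref{thm:dimension}~(1) (equivalently Theorem~\ref{thm:localdimension}~(1), for the lower bound), and derives claim (2) directly from the pointwise estimate \eqref{eq:to} in the proof of Theorem~\ref{thm:typical}~(2). Your additional observation that one may intersect the two residual sets to obtain both conclusions for a single typical $E$ is a harmless elaboration not needed for the statement as formulated.
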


\begin{proof}
The claim (1) follows from the Remark \ref{re:lowlocal} and the proof of Theorem \ref{thm:dimension} (1). The claim (2)  follows immediately from the proof of the Theorem \ref{thm:typical} (2).
\end{proof}

We do not know whether we can obtain equality in the above claim (2).

\subsection{Normal numbers}
It is clear that the Cantor ternary set does not contain any normal numbers, but things are different when we add randomness. We have the following result for our random Cantor sets under the natural measure $\mu$. For the definition of normal numbers and further results, see \cite{Bugeaud}.

\begin{proposition}\label{Pro:normal}
Almost surely for $E\in \Omega$, we have that $\mu$-almost all $x \in E$ is a normal number.
\end{proposition}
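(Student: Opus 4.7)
My plan is to exploit Borel's theorem that the set $\mathcal{N}\subset [0,1]^d$ of non-normal numbers has Lebesgue measure zero, and to compare $\mu$ with Lebesgue measure in expectation. The structure of the proof should mirror a simple Fubini/weak-convergence argument, with no reliance on the delicate estimates that drove the dimension theorems.

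The key deterministic computation is the identity
\[
\EE(\mu_n(A))=\mathcal{L}(A) \quad \text{for every Borel } A\subset[0,1]^d,
\]
which follows at once from the definition $\mu_n(A)=p_n^{-1}\mathcal{L}(A\cap E_n)$, Fubini's theorem, and the observation made just before Lemma \ref{lem:twopoint} that $\PP(x\in E_n)=p_n$ for Lebesgue-a.e.\ $x$. Since $\mu_n$ weakly converges to $\mu$ almost surely, the Portmanteau theorem gives $\mu(U)\le \liminf_{n\to\infty}\mu_n(U)$ a.s.\ for every open $U\subset[0,1]^d$, and Fatou's lemma then yields
\[
\EE(\mu(U)) \le \liminf_{n\to\infty}\EE(\mu_n(U)) = \mathcal{L}(U).
\]

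Given $\varepsilon>0$, outer regularity of Lebesgue measure lets me cover $\mathcal{N}$ by an open set $U\supset\mathcal{N}$ with $\mathcal{L}(U)<\varepsilon$; hence $\EE(\mu(\mathcal{N}))\le \EE(\mu(U))<\varepsilon$. Letting $\varepsilon\to 0$ gives $\EE(\mu(\mathcal{N}))=0$, so $\mu(\mathcal{N})=0$ almost surely, which is exactly the assertion that $\mu$-a.e.\ $x\in E$ is normal. Since the definition of $\mathcal{N}$ encompasses all integer bases simultaneously, no separate treatment of different bases is required.

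No serious obstacle arises: the argument is soft and uses only the expectation identity $\EE(\mu_n(\cdot))=\mathcal{L}(\cdot)$, weak convergence, and outer regularity. The two-point estimates of Lemma \ref{lem:twopoint}, which were indispensable for the almost sure dimension results, play no role here, because the Lebesgue-negligibility of $\mathcal{N}$ already carries all the weight. The only mild subtlety to state explicitly is that $\mu$ is typically singular with respect to Lebesgue measure on any individual realization, so the comparison must be made after taking $\EE(\cdot)$ rather than pathwise.
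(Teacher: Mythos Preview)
Your proof is correct and follows essentially the same route as the paper: the paper deduces the proposition from Borel's normal number theorem together with a lemma stating that any Lebesgue-null set $F$ satisfies $\mu(F)=0$ almost surely, and that lemma is proved exactly as you do---outer regularity, the identity $\EE(\mu_n(U))=\mathcal{L}(U)$ via Fubini, the Portmanteau inequality $\mu(U)\le\liminf_n\mu_n(U)$ for open $U$, and Fatou's lemma. The only cosmetic difference is that the paper isolates the ``$\mathcal{L}(F)=0\Rightarrow \mu(F)=0$ a.s.'' step as a separate lemma, whereas you apply it directly to the non-normal numbers.
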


This follows by Borel's normal numbers theorem and the following Lemma. Recall that Borel's normal number theorem claims 
that almost every (with respect to Lebesgue measure) real numbers are normal. The following Lemma (observation) is due to Pablo Shmerkin.

\begin{lemma}\label{pro:measureargument}
Let $F\subset [0,1]^d$ with $\mathcal{L}(F)=0$. Then almost surely $\mu(F)=0.$ 
\end{lemma}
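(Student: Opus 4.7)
The plan is to exploit the weak convergence $\mu_n \to \mu$ together with the elementary identity $\EE(\mu_n(U)) = \mathcal{L}(U)$ for any open set $U$, and then use outer regularity of Lebesgue measure to pass from null sets to open sets of small measure.

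First I would record the identity $\EE(\mu_n(U)) = \mathcal{L}(U)$ for every open $U \subset [0,1]^d$. By Fubini,
\[
\EE(\mu_n(U)) = \int_U p_n^{-1}\, \PP(x \in E_n)\, dx,
\]
and since $\PP(x \in E_n) = p_n$ for every $x \in [0,1]^d \setminus \widetilde{B}$ (recall $\widetilde{B}$ is the Lebesgue-null union of dyadic boundaries introduced before Lemma \ref{lem:twopoint}), the integrand equals $1$ Lebesgue-a.e.\ on $U$, yielding $\EE(\mu_n(U)) = \mathcal{L}(U)$.

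Next, given $F$ with $\mathcal{L}(F) = 0$ and an arbitrary $\varepsilon > 0$, choose by outer regularity an open set $U \supset F$ with $\mathcal{L}(U) < \varepsilon$. For each realization $\omega$, weak convergence of $\mu_n^\omega$ to $\mu^\omega$ combined with the portmanteau theorem on open sets gives
\[
\mu^\omega(F) \;\leq\; \mu^\omega(U) \;\leq\; \liminf_{n \to \infty} \mu_n^\omega(U).
\]
Taking expectation and applying Fatou's lemma,
\[
\EE(\mu(F)) \;\leq\; \EE\Bigl(\liminf_{n \to \infty} \mu_n(U)\Bigr) \;\leq\; \liminf_{n \to \infty} \EE(\mu_n(U)) \;=\; \mathcal{L}(U) \;<\; \varepsilon.
\]
Since $\varepsilon$ was arbitrary, $\EE(\mu(F)) = 0$, and hence $\mu(F) = 0$ almost surely.

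The only subtle point is the use of the portmanteau inequality for open sets; this is standard and requires nothing more than that $\mu_n \rightharpoonup \mu$, which has already been asserted in the paper. The rest is routine: the identity $\EE(\mu_n(U)) = \mathcal{L}(U)$ is immediate from the single-point probability computation, and the passage from open to null sets is outer regularity. I do not expect any serious obstacle.
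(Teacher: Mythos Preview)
Your proof is correct and follows essentially the same route as the paper: outer regularity to pass to an open set $U$ of small Lebesgue measure, the portmanteau inequality $\mu(U)\le\liminf_n\mu_n(U)$ from weak convergence, the identity $\EE(\mu_n(U))=\mathcal{L}(U)$ via Fubini, and Fatou's lemma to conclude $\EE(\mu(F))<\varepsilon$. The only difference is presentational---you spell out the single-point probability computation and name portmanteau and outer regularity explicitly, whereas the paper cites \cite[Theorem 1.24]{Mattila1995} for the open-set inequality.
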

\begin{proof}
Let $\varepsilon>0$, then there is an open set $U\supset F$ with $\mathcal{L}(U)<\varepsilon$.  Note that $\mu(U)\leq \liminf_{n\rightarrow \infty}\mu_n(U)$, see \cite[Theorem 1.24]{Mattila1995}. Applying Fubini's theorem we obtain 
\[
\EE (\mu_n(U))=\EE (\int \textbf{1}_{(U\cap E_n)}(x) p_n^{-1}dx) = \mathcal{L}(U).
\]
Combining these with Fatou's lemma, we have
\[
\EE(\mu(F))\leq \EE(\mu(U))\leq \liminf_{n\rightarrow \infty}\EE (\mu_n(U)) \leq \mathcal{L}(U) <\varepsilon. 
\]
By the arbitrary choice of $\varepsilon,$ we finish the proof.
\end{proof}

\subsection{Tube null sets}
A set $E \subset \RR^d (d \geq 2)$ is called tube null if for any $\varepsilon> 0$, there exist countable many tubes $\{T_i\}$ covering $E$ and $\sum_i w(T_i)^{d-1} < \varepsilon$. Here a tube $T$ with width $w= w(T) > 0$ is the $w/2$- neighborhood of some line in $\mathbb{R}^d$. We refer to \cite{Carbery} for the background and more details on tube null sets. In \cite{Shmerkin1}, the following result is proved.

\begin{proposition}
If $\sup_{k\in \NN} M_k <\infty$ and the almost sure Hausdorff dimension is larger than $d-1$, then almost surely $E$ is not tube null.
\end{proposition}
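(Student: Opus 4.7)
The plan is to show that, almost surely, the natural measure $\mu$ on $E$ satisfies $\mu(T)\leq C\,w(T)^{d-1}$ for every tube $T$, with a deterministic constant $C$. This suffices: if $E\subset\bigcup_iT_i$, then $\mu(E)\leq C\sum_iw(T_i)^{d-1}$, hence $\sum_iw(T_i)^{d-1}\geq \mu(E)/C>0$, contradicting the definition of tube-null. An on-average version of the bound is immediate: since $\EE[\mu_n(A)]=\mathcal{L}(A)$ for Borel $A$ (as already used in Lemma~\ref{pro:measureargument}), Fatou gives
\[
\EE[\mu(T)]\leq\mathcal{L}(T)\leq c_d\,w(T)^{d-1}.
\]
The real work is to promote this to a $\PP$-almost sure bound that is uniform over all tubes.

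The key input is a $k$-point correlation estimate generalising Lemma~\ref{lem:twopoint}. For each fixed $k$ and any $\varepsilon>0$, one aims at
\[
\PP(x_1,\dots,x_k\in E_n)\leq C_{k,\varepsilon}\,p_n^{k}\,W_{s_1-d-\varepsilon}(x_1,\dots,x_k),
\]
where $W_\beta$ is a multi-point kernel, for instance of the form $\prod_{e}\ell(e)^{\beta}$ with $e$ ranging over edges of a minimum spanning tree of $\{x_i\}$. This would be proved by induction on $k$, conditioning on the cube of $\D_m$ containing each $x_i$ scale-by-scale, just as in Lemma~\ref{lem:twopoint}; the hypothesis $\sup_k M_k<\infty$ keeps the ratios $r_n/r_{n-1}$ bounded so that the induction is uniform in $n$. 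Substituting into $\EE[\mu(T)^k]=\lim_n\int_{T^k}p_n^{-k}\PP(\vec x\in E_n)\,d\vec x$ and using $s_1>d-1$, the resulting multi-integral is of order $w(T)^{k(d-1)}$, whence
\[
\EE[\mu(T)^k]\leq \bigl(C\,w(T)^{d-1}\bigr)^{k}.
\]

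The argument is closed by a net/union-bound procedure. Tubes of width $w$ inside $[0,1]^d$ are parametrised by a space of bounded dimension (direction on $S^{d-1}$ and transverse position on a $(d-1)$-cell) and so admit a $w$-dense net of polynomial size $O(w^{-N(d)})$. At dyadic scale $w=2^{-j}$, Markov applied to the $k$-th moment bound, followed by a union bound over the net, yields
\[
\PP\bigl(\exists\,T\text{ of width }2^{-j}\text{ with }\mu(T)>C\,2^{-j(d-1)}\bigr)\leq 2^{-j},
\]
once $k$ is fixed large enough in terms of $N(d)$. Borel--Cantelli then produces a uniform tube bound almost surely for all tubes of width at most $2^{-K(\omega)}$; wider tubes are handled by $\mu([0,1]^d)=1$. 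The main obstacle is the multi-point correlation estimate and the bookkeeping needed to beat the polynomial complexity $w^{-N(d)}$ of the tube parameter space with the tail decay of $\mu(T)$; the hypothesis $s_1>d-1$ is precisely what makes this exponent balance work, and $\sup_k M_k<\infty$ is what makes the hierarchical induction uniform in the level.
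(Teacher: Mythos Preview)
The paper does not supply a proof of this proposition: it is quoted as a result of \cite{Shmerkin1} with no argument given, so there is nothing in the present paper to compare your sketch against.

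On the merits of your outline, there is a genuine gap at the union-bound step. From $\EE[\mu(T)^k]\le C_k\,w^{k(d-1)}$ and Markov one gets $\PP\bigl(\mu(T)>\lambda w^{d-1}\bigr)\le C_k\lambda^{-k}$, a bound that is \emph{independent of $w$}. A net of tubes of width $w=2^{-j}$ has cardinality $\asymp 2^{jN(d)}$, so after the union bound the probability is $\asymp 2^{jN(d)}C_k\lambda^{-k}$; for \emph{fixed} $k$ this forces $\lambda\gtrsim 2^{j(N(d)+1)/k}$, and the almost-sure conclusion is only $\mu(T)\le C\,w^{\,d-1-\delta}$ with $\delta=(N(d)+1)/k>0$. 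That exponent loss is fatal for the tube-null application: from $1=\mu(E)\le C\sum_i w(T_i)^{\,d-1-\delta}$ one cannot deduce any positive lower bound on $\sum_i w(T_i)^{\,d-1}$, so tube covers with arbitrarily small $(d-1)$-content are not excluded. Your sentence ``once $k$ is fixed large enough in terms of $N(d)$'' is therefore where the argument breaks.

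The remedy is either to let $k$ grow with the scale $j$ and prove that the moment constants satisfy $C_k\le A^k$ for some $A$ independent of $k$ (this does hold for the present hierarchical model, via a coalescent-tree factorisation of the $k$-point correlations, but it is an additional lemma you have not stated), or---as in \cite{Shmerkin1}---to bypass moments and use the bounded-difference martingale structure of $n\mapsto\mu_n(T)$ together with a Hoeffding--Azuma inequality, obtaining directly an exponential tail that beats the polynomial entropy of the tube net. In either formulation the hypothesis $\sup_k M_k<\infty$ is what keeps the per-level increments (equivalently, the per-level correlation constants) uniformly bounded.
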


It is natural to ask that how about the case $\sup_{k\in \NN} M_k =\infty$. Another interesting question is that what will happen if there is no randomness. For instance, what happens for  the self-similar sets of $\Omega(M,N)$, that is the elements of $\Omega(M,N)$ we take the same position for the chosen subcubes in every step during our construction. For the self-similar sets, see \cite[Chapter 9]{Falconer2003}. 

\begin{question}
Is every self-similar set of $\Omega(M,N)$ tube null (exclude the trivial one with $N=M^{d}$)? 
\end{question}

Note that the classical Marstrand-Mattila projection theorem (see e.g \cite{Falconer2003,Mattila1995}) implies that any set $E\in \RR^{d}$ with $\dim_H E<d-1$ is tube null, see \cite[Proposition 7]{Carbery}. Thus it is sufficient to consider the self-similar set of $\Omega(M,N)$ with Hausdorff dimension larger or equal $d-1$ for above question.

We can also consider which kind of self-similar set or self-affine sets are tube null. In \cite{Harangi}, the author proved that the Koch snowflake curve is tube null. In fact we can apply the similar arguments to \cite{Harangi} to obtain that  the Sierpi\'nski  triangle is tube null also, we omit the details here. For self-affine sets and Bedford-McMullen carpets, see \cite[Chapter 9]{Falconer2003}.

\begin{question}
Is every Bedford-McMullen carpet tube null (exclude the trivial carpet which is the unit cube)?
\end{question}

\subsection{Lower dimension}

The \emph{lower dimension} can be considered as the dual of Assouad dimension. 
It is defined as follows: 
\begin{align*}
\dim_{L}E  = \sup\Big\{ s \geq 0:\exists ~C>0 &\textrm{ s.t. }  \forall ~0<r<R <\sqrt{d},\\ 
& \inf_{x\in E}\mathcal{N}(E\cap B(x,R),r)\geq C\left(R/r\right)^{s}
\Big\}.
\end{align*}
The lower dimension was introduced by Larman, see \cite{Larman}. For the recent works on the Lower dimension, we refer to \cite{Fraser2014} and references therein. For our random Cantor sets, if $\{N_k\}$ is bounded then we have the dual result for the lower dimension. 

\begin{proposition}
If $\{N_k\}$ is bound, then for any $E\in \Omega$ we have 
\[
\dim_L E= \liminf_{k\rightarrow \infty} \inf_{n\in \NN} \frac{\log P(n,n+k) }{ -\log r(n,n+k)}.
\]
\end{proposition}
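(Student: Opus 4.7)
Write $\tau$ for the right-hand side, so $\tau$ is the natural dual of $s_3$ in \eqref{eq:assouad}. The plan is to mirror the proof of Theorem \ref{thm:dimension}(5) with the two inequalities exchanged: the upper bound on $\dim_L E$ will use a dual of Lemma \ref{lem:wen} (if $R_i/r_i\to\infty$ and $x_i\in E$ satisfy $\mathcal{N}(E\cap B(x_i,R_i),r_i)\le C(R_i/r_i)^s$ for some $C>0$, then $\dim_L E\le s$; this is immediate from the definition of $\dim_L$), and the lower bound will proceed by a covering estimate analogous to the upper bound $\dim_A E\le s_3$ there.

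For $\dim_L E\le \tau$, given $\varepsilon>0$ extract $k_i\nearrow\infty$ and $n_i$ with $P(n_i,n_i+k_i)\le r(n_i,n_i+k_i)^{-(\tau+\varepsilon)}$, pick any $x_i\in E$, and set $R_i=r_{n_i-1}$, $r_i=\sqrt d\,r_{n_i+k_i}$. The ball $B(x_i,R_i)$ meets at most $3^d$ cubes of $E_{n_i-1}$, each containing $P(n_i,n_i+k_i)$ cubes of $E_{n_i+k_i}$; since every such cube has diameter $r_i$, one $r_i$-set covers it, so $\mathcal{N}(E\cap B(x_i,R_i),r_i)\le 3^d P(n_i,n_i+k_i)\le 3^d(\sqrt d)^{\tau+\varepsilon}(R_i/r_i)^{\tau+\varepsilon}$ with $R_i/r_i=r(n_i,n_i+k_i)^{-1}/\sqrt d\to\infty$. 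The dual lemma then gives $\dim_L E\le \tau$.

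For the lower bound, first observe that if $\{M_k\}$ is unbounded then $\tau=0$ automatically: for any $k$ and $\delta>0$, choosing $n$ with $\log M_n>(k+1)\log(\sup N_k)/\delta$ forces the ratio at level $(n,n+k)$ below $\delta$, so $\inf_n(\cdot)=0$ for every $k$, and the lower bound $\dim_L E\ge 0$ is trivial. Hence assume $M_k\le M$ uniformly. Fix $\varepsilon>0$ and choose $k_0$ so that $P(n,n+k)\ge r(n,n+k)^{-(\tau-\varepsilon)}$ for all $n$ and all $k\ge k_0$. Given $0<r<R<\sqrt d$ and $x\in E$, let $n,m$ satisfy $\sqrt d\,r_n\le R<\sqrt d\,r_{n-1}$ and $\sqrt d\,r_m<r\le\sqrt d\,r_{m-1}$, set $k=m-n$, and let $Q_n\in E_n$ be the cube containing $x$, so $Q_n\subset B(x,R)$. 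In the main case $k\ge\max(2,k_0)$, $Q_n$ contains $P(n+1,n+k-1)$ cubes of $E_{m-1}$, each meeting $E$, and any $r$-set intersects at most a dimensional constant $C_d$ of them (because $r\le\sqrt d\,r_{m-1}$). Shaving the endpoints via $P(n,n+k)\le N^2 P(n+1,n+k-1)$ with $N=\sup N_k$, and using $R/r\le r(n,n+k)^{-1}$, gives
\[
\mathcal{N}(E\cap B(x,R),r)\ge \frac{P(n+1,n+k-1)}{C_d}\ge \frac{P(n,n+k)}{C_d N^2}\ge \frac{r(n,n+k)^{-(\tau-\varepsilon)}}{C_d N^2}\ge \frac{(R/r)^{\tau-\varepsilon}}{C_d N^2}.
\]
The residual range $k<\max(2,k_0)$ is handled by the trivial bound $\mathcal{N}\ge 1$, since in that range $R/r\le M^{\max(2,k_0)}$ is bounded by a constant. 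Letting $\varepsilon\downarrow 0$ yields $\dim_L E\ge \tau$. The main technical point is precisely the endpoint shaving $P(n,n+k)\le N^2 P(n+1,n+k-1)$, the lower-dimension counterpart of the $N_{n+1}N_{n+k+1}\le N^2$ estimate in the proof of Theorem \ref{thm:dimension}(5); without it, the possibly large factors $M_n$ and $M_{n+k}$ present in $R/r$ would prevent matching the bound $(R/r)^{\tau-\varepsilon}$.
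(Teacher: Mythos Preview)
Your proof is correct and follows essentially the same approach as the paper's sketch: you mirror the Assouad-dimension argument of Theorem~\ref{thm:dimension}(5) for the two inequalities (using the dual of Lemma~\ref{lem:wen} for the upper bound, and the covering estimate with the endpoint shaving $P(n,n+k)\le N^2 P(n+1,n+k-1)$ for the lower bound when $\{M_k\}$ is bounded), and you handle the unbounded-$\{M_k\}$ case by showing the formula itself gives~$0$. The only organizational difference is that you split by direction of inequality whereas the paper splits by the bounded/unbounded case first, but the content is the same.
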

\begin{proof}[Proof Sketch.]
If $\{M_n\}$ is bound, then we obtain the result by the similar argument as in the proof for Assouad dimension.

For the case $\{M_n\}$ is unbound. Observe that any set $E\in \Omega$ has lower dimension zero. Thus it is sufficient to show that the formula also give the zero value. This follows from the fact that for any $k\in \NN,$  
\[
\inf_{n\in \NN} \frac{\log P(n,n+k) }{ -\log r(n,n+k)}=0.
\]
Thus we complete the proof.
\end{proof}
We do not know the general result  for the lower dimension of these random Cantor sets when $\{N_k\}$ is unbound. We show two examples in the following with special sequence $M_k, N_k$. 

\begin{example}
If there exists a subsequence $\{n_k\}\subset \NN$ such that $M_{n_k}\nearrow \infty$ and  $\liminf_{n_k\rightarrow \infty}\frac{\log N_{n_k}}{\log M_{n_k}}=0$, then any element of $\Omega(M_n,N_n)$ has lower dimension zero.
\end{example}
\begin{proof} 
Let $E\in \Omega$. For any $\varepsilon>0$, there exists $N$ such that $n_k\geq N$ implies $\log N_{n_k} / \log M_{n_k} <\varepsilon.$ Note that there exists $C>0$ which depends on $d$ only such that for any $x\in E$,
\begin{equation*}
\begin{aligned}
\mathcal{N}(E&\cap B(x, r_{n_k-1}), r_{n_k})\\
&\leq CN_{n_k}\leq C M_{n_k}^{\varepsilon}=C \left(\frac{r_{n_k-1}}{r_{n_k}}\right)^{\varepsilon}.
\end{aligned}
\end{equation*}
By the condition that $M_{n_k}\nearrow \infty$, we obtain that $\dim_L E \leq \varepsilon$, and hence $\dim_LE=0$ by the arbitrary choice of $\varepsilon$.
\end{proof}
 
This example responds 
an interesting fact of lower dimension that is  if a set $E$ has isolate point then $E$ has lower dimension zero.

\begin{example}
Let $M_n=2^{n}$ and $N_n=2^{nd}-1$. Then any element of $\Omega(M_n,N_n)$ has lower dimension $d$.
\end{example}
\begin{proof}[Proof Sketch.]
Let $E\in \Omega$. Note that there exist positive constants $C_1, C_2$ such that for any $x\in E, 0<R < \sqrt{d}$, 
\[
C_1 R^{d}\leq \mathcal{L}(E\cap B(x,R)) \leq C_2 R^{d}.
\]
Hence there exists $C_3$ such that for any $x\in E, 0<r<R < \sqrt{d}$, 
\[
\mathcal{N}(E\cap B(x,R), r) \geq C_3 \left(\frac{R}{r}\right)^{d}.
\]
Thus the claim follows by the fact that any set of $\RR^{d}$ has lower dimension less or equal than $d$.
\end{proof}

\noindent\textbf{Acknowledgements.} 
I appreciate to Ville Suomala for many helpful discussions go though this work. I also thank Bing Li, 
Pablo Shmerkin, and Shengyou Wen for many valuable discussions.


\end{document}